\documentclass[12pt]{amsart}

\usepackage{amsmath,amssymb,amsthm}
\usepackage{mathrsfs}
\usepackage{tikz-cd}
\usepackage{enumitem}
\usepackage{hyperref}
\usepackage{a4wide}
\renewcommand{\le}{\leqslant}
\renewcommand{\ge}{\geqslant}

\theoremstyle{plain}
\newtheorem{theorem}{Theorem}[section]
\newtheorem{proposition}[theorem]{Proposition}
\newtheorem{lemma}[theorem]{Lemma}
\newtheorem{corollary}[theorem]{Corollary}

\theoremstyle{definition}
\newtheorem{definition}[theorem]{Definition}
\newtheorem{example}[theorem]{Example}

\theoremstyle{remark}
\newtheorem{remark}[theorem]{Remark}
\newtheorem{problem}[theorem]{Problem}
\newtheorem{notation}[theorem]{Notation}

\DeclareMathOperator{\FBL}{FBL}
\DeclareMathOperator{\SB}{SB}
\DeclareMathOperator{\Subl}{Subl}
\DeclareMathOperator{\Ideal}{Ideal}
\DeclareMathOperator{\dist}{dist}
\DeclareMathOperator{\Graph}{Graph}

\newcommand{\R}{\mathbb{R}}
\newcommand{\N}{\mathbb{N}}
\newcommand{\bbQ}{\mathbb{Q}}

\title{Polish spaces of separable Banach lattices}
\author[M.~Niwi\'nski]{Mariusz Niwi\'nski}
\address{Doctoral School of Exact and Natural Sciences, Faculty of Mathematics
and Computer Science, Institute of Mathematics, Jagiellonian University,
{\L}ojasiewicza 6, 30-348 Krak\'ow, Poland}
\email{mariusz.niwinski@doctoral.uj.edu.pl}
\date{May 2026}
\subjclass[2020]{46B42, 46E15, 03E15}
\keywords{Banach lattice, free Banach lattice, Polish space, Borel hierarchy, Wijsman topology, Michael's selection theorem}

\begin{document}

\begin{abstract}
We study the descriptive complexity of classes of separable Banach lattices.
Building on the theory of coding spaces for separable Banach spaces, we introduce
two Polish space encodings of separable Banach lattices: one via closed
sublattices of the universal lattice $\mathcal{C}=C(\Delta;L_1)$, and one via
closed order ideals of the free Banach lattice $\FBL[\ell_1]$.  We prove that,
for every separable Banach lattice $E$, the spaces of closed sublattices and of
closed order ideals of $E$ are Polish subspaces of the hyperspace of closed
subsets of $E$.  We also prove that the Fremlin projective tensor-product
operation on ideal codes is $\boldsymbol{\Sigma}^0_2$-measurable and has a
$G_\delta$ graph. 
\end{abstract}

\maketitle

\section{Introduction}

This work studies the descriptive complexity of separable Banach lattices by
encoding them as elements of Polish spaces, extending classical results of
Bossard~\cite{Boss2002Cod} and Godefroy--Saint-Raymond~\cite{God2018Cod} from
the Banach space setting to the lattice setting.

The quotient-side coding used in this paper is the Banach-lattice instance of
Kania's quotient-encoding programme~\cite{Kan2026Pol}.  The conceptual source is
Kania's framework of separable quotient generators~\cite[Definition~3.11]{Kan2026Pol}:
objects are represented as quotients of a fixed generator, and definability is
read from the corresponding kernel data.  We use in particular the closedness and
Polishness theorem for admissible kernel spaces~\cite[Proposition~3.6]{Kan2026Pol},
the Wijsman-continuity of quotient-norm coordinates~\cite[Lemma~3.8]{Kan2026Pol},
and the quotient-code definability scheme~\cite[Main Theorem~3.13 and
Theorem~3.16]{Kan2026Pol}.  Kania also records that \(\FBL[\ell_1]\) is the
separable quotient generator for Banach lattices~\cite[Remark~5.1(c)]{Kan2026Pol}.
The sublattice coding via the universal lattice \(\mathcal C\) is complementary
to this quotient picture.

The paper is organised as follows.
In Section~\ref{sec:prelim-dst}, we recall the basic notions from descriptive
set theory and the theory of hyperspaces.
In Section~\ref{sec:prelim-lattice}, we collect the necessary background on
Banach lattices and free Banach lattices.
In Section~\ref{sec:coding}, we introduce two coding spaces of separable Banach
lattices and establish that the families of closed sublattices and closed order
ideals form Polish spaces.
In Section~\ref{sec:tensor}, we study the Borel complexity of the Fremlin tensor
product map.
In Section~\ref{sec:further}, we study several lattice-geometric properties
within the coding spaces: we establish $\boldsymbol{\Pi}^0_3$ bounds for uniform
monotonicity and order uniform smoothness (in the sense of
Kurc~\cite{Kurc1993Dual}), prove that strict monotonicity is coanalytic, restrict
the SICK-compactum discussion explicitly to the metrisable case, and formulate
the remaining completeness and sharpness problems.

\section{Preliminaries from descriptive set theory}\label{sec:prelim-dst}

We briefly recall the basic definitions.  A~\emph{Polish space} is a separable
and completely metrisable topological space.

\subsection{The Borel hierarchy}

Let $X$ be a Polish space.  The \emph{Borel hierarchy} consists of the families
$\boldsymbol{\Sigma}^0_\alpha$, $\boldsymbol{\Pi}^0_\alpha$, and
$\boldsymbol{\Delta}^0_\alpha$ for countable ordinals $\alpha \ge 1$,
defined inductively as follows.
\begin{enumerate}[label=(\roman*)]
    \item $\boldsymbol{\Sigma}^0_1$ is the family of open subsets of $X$.
    \item For each $\alpha$, the family $\boldsymbol{\Pi}^0_\alpha$ consists of
      the complements of sets in~$\boldsymbol{\Sigma}^0_\alpha$.
    \item For $\alpha > 1$, a set belongs to $\boldsymbol{\Sigma}^0_\alpha$ if
      and only if it can be written as a countable union of sets, each belonging
      to $\boldsymbol{\Pi}^0_\beta$ for some $\beta < \alpha$.
    \item $\boldsymbol{\Delta}^0_\alpha = \boldsymbol{\Sigma}^0_\alpha \cap
      \boldsymbol{\Pi}^0_\alpha$.
\end{enumerate}
In particular, $\boldsymbol{\Pi}^0_1$ is the family of closed sets,
$\boldsymbol{\Delta}^0_1$ is the family of clopen sets,
$\boldsymbol{\Sigma}^0_2$ is the family of $F_\sigma$~sets, and
$\boldsymbol{\Pi}^0_2$ is the family of $G_\delta$~sets.

A set $A \subseteq X$ is called
\emph{$\boldsymbol{\Sigma}^0_\alpha$-complete} if
$A \in \boldsymbol{\Sigma}^0_\alpha(X)$ and for every Polish space $Y$ and every
$B \in \boldsymbol{\Sigma}^0_\alpha(Y)$ there exists a continuous map
$f \colon Y \to X$ such that $f^{-1}(A) = B$.  The notion of
\emph{$\boldsymbol{\Pi}^0_\alpha$-complete} is defined analogously.
In particular, a $\boldsymbol{\Sigma}^0_\alpha$-complete set does not belong to
$\boldsymbol{\Pi}^0_\alpha$, hence not to $\boldsymbol{\Delta}^0_\alpha$.

\subsection{Hyperspaces of closed sets}\label{subsec:hyper}

Let $X$ be a compact metric space.  Denote by $\mathcal{F}(X)$ the family of
all non-empty closed subsets of $X$, equipped with the \emph{Vietoris topology},
which is generated by the subbase consisting of
\[
    E^+(U) = \{F \in \mathcal{F}(X) \colon F \cap U \neq \emptyset\}, \qquad
    E^-(U) = \{F \in \mathcal{F}(X) \colon F \subseteq U\},
\]
for all open $U \subseteq X$.  This topology coincides with the one induced by
the \emph{Hausdorff--Pompeiu metric}
\[
    d_H(E,F) = \max\Bigl\{\sup_{x \in E}\dist(x,F),\;\sup_{y \in F}\dist(y,E)\Bigr\}.
\]
When $X$ is compact, $\bigl(\mathcal{F}(X), d_H\bigr)$ is a Polish space.

If $X$ is merely Polish (but not compact), $\mathcal{F}(X)$ with the Vietoris
topology is no longer separable, and there is no canonical Polish topology on it.
Following~\cite{God2018Cod}, we call a topology $\tau$ on $\mathcal{F}(X)$
\emph{admissible} if $\bigl(\mathcal{F}(X), \tau\bigr)$ is a Polish space and:
\begin{enumerate}[label=(A\arabic*)]
    \item\label{A1} For every open $U \subseteq X$, the set $E^+(U)$ is
      $\tau$-open.
    \item\label{A2} There exists a $\tau$-subbase each element of which is a
      countable union of sets of the form $E^+(U) \setminus E^+(V)$ for open
      $U, V \subseteq X$.
\end{enumerate}
The Borel $\sigma$-algebra generated by any admissible topology equals the
$\sigma$-algebra generated by $\{E^+(U) \colon U \subseteq X \text{ open}\}$,
known as the \emph{Effros--Borel $\sigma$-algebra};
see~\cite[Section~2]{God2018Cod}.
Moreover, if $\tau_1$ and $\tau_2$ are two admissible topologies, then every
$\tau_1$-open set is $\boldsymbol{\Sigma}^0_2$ in~$\tau_2$.

It is natural to impose a further compatibility condition:
\begin{enumerate}[resume*]
    \item\label{A3} The set
      $\{(x, F) \in X \times \mathcal{F}(X) \colon x \in F\}$ is closed in the
      product topology on $X \times \mathcal{F}(X)$.
\end{enumerate}
Note that the membership relation $x \in F$ is always
$\boldsymbol{\Pi}^0_2$ with respect to any admissible topology: fixing a
countable base $\{U_n\}_{n \in \omega}$ of $X$, one has
\[
    \{(x,F) \colon x \in F\}
    = \bigcap_{n \in \omega}
      \Bigl[(X \setminus U_n) \times \mathcal{F}(X) \;\cup\; X \times E^+(U_n)\Bigr].
\]

\begin{example}\label{ex:compact}
Since every Polish space $X$ is homeomorphic to a totally bounded metric space,
we may choose a compatible metric $\rho$ under which $X$ is totally bounded.  Let
$\hat{X}$ denote the completion of $(X, \rho)$; then $\hat{X}$ is compact.
Identify $\mathcal{F}(X)$ with
\[
    \mathcal{F}^*(X) = \bigl\{K \in \mathcal{F}(\hat{X}) \colon
    \overline{X \cap K}^{\hat{X}} = K\bigr\},
\]
which is a $\boldsymbol{\Pi}^0_2$~subset of the Polish space
$\mathcal{F}(\hat{X})$.  By Alexandrov's theorem, $\mathcal{F}^*(X)$ is itself
Polish.  The restriction of the Vietoris topology of $\mathcal{F}(\hat{X})$ to
$\mathcal{F}^*(X)$ is admissible and satisfies~\ref{A3};
see, e.g.,~\cite[Chapter~2]{Dod2010Ban}.
\end{example}

\begin{example}[Wijsman topology]\label{ex:wijsman}
Let $d$ be a compatible metric on a Polish space $X$, and fix a dense sequence
$\{\alpha_n\}_{n \in \omega} \subseteq X$.  The map
\[
    \mathcal{F}(X) \ni F \;\longmapsto\; \bigl(d(\alpha_n, F)\bigr)_{n \in \omega}
    \in \R^\omega
\]
is injective.  The \emph{Wijsman topology} is the coarsest topology making this
map a homeomorphism onto its image, where $\R^\omega$ carries the product
topology.  It is shown in~\cite{God2018Cod} that the Wijsman topology is
admissible and independent of the choice of dense sequence.  Moreover, the
evaluation map
\[
    X \times \mathcal{F}(X) \ni (x, F) \longmapsto d(x, F)
\]
is continuous when $\mathcal{F}(X)$ carries the Wijsman topology.  Indeed,
suppose $F_n \to F$ in the Wijsman topology and fix $x \in X$.  Given
$\varepsilon > 0$, choose $m$ with $d(x, \alpha_m) < \varepsilon$.  Using the
$1$-Lipschitz estimate $\lvert d(x, G) - d(\alpha_m, G) \rvert \le d(x, \alpha_m)$
for $G \in \mathcal{F}(X)$, we obtain
\[
    \lvert d(x, F_n) - d(x, F) \rvert
    \le 2\varepsilon + \lvert d(\alpha_m, F_n) - d(\alpha_m, F) \rvert,
\]
so $d(x, F_n) \to d(x, F)$ as $\varepsilon \downarrow 0$.  Joint continuity in
$(x, F)$ then follows from $\lvert d(x, F) - d(x', F) \rvert \le d(x, x')$.
Consequently,
\[
    \{(x, F) \in X \times \mathcal{F}(X) \colon x \in F\}
    = \{(x, F) \colon d(x, F) = 0\}
\]
is closed, so condition~\ref{A3} holds.  For closed subspaces or ideals, the
same distance coordinate is the quotient norm \(F\mapsto\|x+F\|\); this is the
basic Wijsman-continuity mechanism used in Kania's framework
\cite[Lemma~3.8]{Kan2026Pol}.
\end{example}

\begin{lemma}\label{lem:closure-map-borel}
Let $X$ be a Polish space and endow $\mathcal{F}(X)$ with its Effros--Borel
$\sigma$-algebra (equivalently, with the Borel $\sigma$-algebra of any admissible
topology).  Define
\[
    \mathrm{Cl}_X \colon X^\omega \to \mathcal{F}(X), \qquad
    \mathrm{Cl}_X\bigl((x_n)_{n \in \omega}\bigr)
    = \overline{\{x_n \colon n \in \omega\}}.
\]
Then $\mathrm{Cl}_X$ is Borel measurable.
\end{lemma}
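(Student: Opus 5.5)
Since the Effros--Borel $\sigma$-algebra on $\mathcal{F}(X)$ is by definition generated by the sets $E^+(U)=\{F\colon F\cap U\neq\emptyset\}$ for open $U\subseteq X$, it suffices to check that $\mathrm{Cl}_X^{-1}(E^+(U))$ is Borel in $X^\omega$ for each open $U$. Indeed, the family of sets $A\subseteq\mathcal{F}(X)$ with $\mathrm{Cl}_X^{-1}(A)$ Borel is a $\sigma$-algebra. Hence it contains the generated $\sigma$-algebra once it contains the generators. By the remark in Subsection~\ref{subsec:hyper}, the Borel $\sigma$-algebra of any admissible topology (for instance the Wijsman topology of Example~\ref{ex:wijsman}) coincides with the Effros--Borel one. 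So this also yields measurability with respect to those topologies.

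The key step is the computation of the preimage. For an open set $U$ and any set $S\subseteq X$, we have $\overline{S}\cap U\neq\emptyset$ if and only if $S\cap U\neq\emptyset$. The nontrivial direction holds because a point of $\overline{S}$ lying in the open set $U$ has $U$ as a neighbourhood, which must meet $S$. Therefore
\[
    \mathrm{Cl}_X^{-1}\bigl(E^+(U)\bigr)
    = \bigl\{(x_n)_{n\in\omega}\in X^\omega \colon \exists n\ x_n\in U\bigr\}
    = \bigcup_{n\in\omega}\pi_n^{-1}(U),
\]
where $\pi_n\colon X^\omega\to X$ is the $n$-th coordinate projection. Each $\pi_n$ is continuous, so each $\pi_n^{-1}(U)$ is open in the product topology. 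Hence the whole preimage is open, and in particular Borel. This gives the result, and in fact the stronger statement that $\mathrm{Cl}_X$ is lower-semicontinuous in the sense that preimages of the sets $E^+(U)$ are open.

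I do not expect a genuine obstacle here. The only points needing care are bookkeeping. First, the closure of a nonempty countable set is nonempty, so $\mathrm{Cl}_X$ indeed takes values in $\mathcal{F}(X)$. Second, the passage from generators to the full $\sigma$-algebra is the standard good-sets argument. If one instead wanted measurability into a specific admissible topology directly, condition~\ref{A2} would also give it: each subbasic set is a countable union of sets $E^+(U)\setminus E^+(V)$, whose preimages are differences of open sets.
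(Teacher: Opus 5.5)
Your proposal is correct and follows essentially the same route as the paper: you show that $\mathrm{Cl}_X^{-1}(E^+(U))=\bigcup_{n}\{(x_j)_{j\in\omega}\colon x_n\in U\}$ is open, and conclude because the sets $E^+(U)$ generate the Effros--Borel $\sigma$-algebra. Your extra remarks (the good-sets argument, nonemptiness of the closure, and the direct route via \ref{A2}) are sound bookkeeping that the paper leaves implicit.
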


\begin{proof}
Let $U \subseteq X$ be open.  Since $U$ is open,
$\overline{\{x_n \colon n \in \omega\}} \cap U \ne \emptyset$ if and only if
$x_n \in U$ for some~$n$.  Hence
\[
    \mathrm{Cl}_X^{-1}\bigl(E^+(U)\bigr)
    = \bigcup_{n \in \omega}
      \bigl\{(x_j)_{j \in \omega} \in X^\omega \colon x_n \in U\bigr\},
\]
which is open in $X^\omega$.  The sets $E^+(U)$ generate the Effros--Borel
$\sigma$-algebra, so $\mathrm{Cl}_X$ is Borel.
\end{proof}

\section{Preliminaries on Banach lattices}\label{sec:prelim-lattice}

Throughout, all Banach spaces are over~$\R$.

\subsection{Basic definitions}

\begin{definition}
A \emph{Banach lattice} is a Banach space $(E, \lVert\cdot\rVert_E)$ equipped
with a lattice order~$\preceq$ (that is, a partial order in which every pair
$x, y \in E$ has a supremum $x \vee y$ and an infimum $x \wedge y$) satisfying,
for all $x, y, z \in E$ and $\alpha > 0$:
\begin{enumerate}[label=(\roman*)]
    \item $x \preceq y \implies x + z \preceq y + z$,
    \item $x \preceq y \implies \alpha x \preceq \alpha y$,
    \item $\lvert x \rvert \preceq \lvert y \rvert \implies
      \lVert x \rVert_E \le \lVert y \rVert_E$,
\end{enumerate}
where $\lvert x \rvert = x \vee (-x)$ denotes the \emph{modulus} of~$x$.
\end{definition}

A \emph{Banach lattice homomorphism} is a linear operator between Banach lattices
that preserves the lattice operations (equivalently, $T(\lvert x \rvert)
= \lvert Tx \rvert$ for all~$x$).  Every such homomorphism is automatically
continuous.

\begin{lemma}\label{lem:lattice-ops-lipschitz}
Let $E$ be a Banach lattice.  Then:
\begin{enumerate}[label=\textup{(\roman*)}]
    \item $\bigl\lVert\,\lvert x \rvert - \lvert y \rvert\,\bigr\rVert_E
      \le \lVert x - y \rVert_E$ for all $x, y \in E$;
    \item the lattice operations $\vee$ and $\wedge$ are Lipschitz:
      \[
          \lVert (x \vee y) - (x' \vee y') \rVert_E
          \le \lVert x - x' \rVert_E + \lVert y - y' \rVert_E,
      \]
      and similarly for $\wedge$.
\end{enumerate}
In particular, $x \mapsto \lvert x \rvert$ and
$(x,y) \mapsto x \vee y,\; x \wedge y$ are continuous.
\end{lemma}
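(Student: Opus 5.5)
For part (i), I will use the standard inequality $\bigl\lvert\,\lvert x\rvert - \lvert y\rvert\,\bigr\rvert \preceq \lvert x-y\rvert$ in any vector lattice and then apply axiom (iii) of the definition of a Banach lattice. To prove the lattice inequality, I will use that $\lvert\cdot\rvert$ is subadditive: $\lvert a+b\rvert \preceq \lvert a\rvert+\lvert b\rvert$. This holds because $a+b \preceq \lvert a\rvert+\lvert b\rvert$ and $-(a+b)\preceq \lvert a\rvert+\lvert b\rvert$, so the supremum $\lvert a+b\rvert=(a+b)\vee(-(a+b))$ is also dominated. Applying this to $x=(x-y)+y$ gives $\lvert x\rvert-\lvert y\rvert\preceq\lvert x-y\rvert$, and by symmetry $\lvert y\rvert-\lvert x\rvert\preceq\lvert x-y\rvert$. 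Taking the supremum of these two elements yields $\bigl\lvert\,\lvert x\rvert-\lvert y\rvert\,\bigr\rvert\preceq\lvert x-y\rvert$. Both sides are positive and hence equal to their own moduli, so axiom (iii) gives $\bigl\lVert\,\lvert x\rvert-\lvert y\rvert\,\bigr\rVert_E\le\lVert x-y\rVert_E$.

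For part (ii), I will express $\vee$ and $\wedge$ through the modulus using the identities
\[
x\vee y=\tfrac12\bigl(x+y+\lvert x-y\rvert\bigr),\qquad x\wedge y=\tfrac12\bigl(x+y-\lvert x-y\rvert\bigr).
\]
These follow from translation invariance: $x\vee y = y+(x-y)\vee 0$, together with $u\vee 0=\tfrac12(u+\lvert u\rvert)$ for $u=x-y$. Then
\[
\lVert (x\vee y)-(x'\vee y')\rVert_E\le \tfrac12\bigl(\lVert x-x'\rVert_E+\lVert y-y'\rVert_E+\bigl\lVert\,\lvert x-y\rvert-\lvert x'-y'\rvert\,\bigr\rVert_E\bigr).
\]
By (i), the last term is at most $\lVert (x-y)-(x'-y')\rVert_E\le\lVert x-x'\rVert_E+\lVert y-y'\rVert_E$, which gives the claimed bound. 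The computation for $\wedge$ is identical, or it can be deduced from $x\wedge y=-((-x)\vee(-y))$.

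Continuity of $x\mapsto\lvert x\rvert$, $\vee$ and $\wedge$ is then immediate from these Lipschitz estimates. The only step needing any care is the lattice inequality in (i), specifically the identity $u\vee0=\tfrac12(u+\lvert u\rvert)$. To check it, I will first note that $u = u^+ - u^-$ and $\lvert u\rvert = u^+ + u^-$ with $u^+\wedge u^-=0$. This in turn rests on the standard identity $a+b=a\vee b+a\wedge b$, which follows from translation invariance and the fact that negation reverses the order.
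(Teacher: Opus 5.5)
Your proposal is correct and follows the paper's proof. For (i) you use the vector-lattice inequality $\bigl\lvert\,\lvert x\rvert-\lvert y\rvert\,\bigr\rvert\preceq\lvert x-y\rvert$ together with norm monotonicity, and for (ii) you use the identities $x\vee y=\tfrac12(x+y+\lvert x-y\rvert)$, $x\wedge y=\tfrac12(x+y-\lvert x-y\rvert)$ combined with (i) and the triangle inequality; the only difference is that you also derive the underlying vector-lattice facts (subadditivity of $\lvert\cdot\rvert$ and $u\vee 0=\tfrac12(u+\lvert u\rvert)$), which the paper simply quotes.
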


\begin{proof}
The lattice inequality
$\bigl\lvert\, \lvert x \rvert - \lvert y \rvert \,\bigr\rvert
\preceq \lvert x - y \rvert$ holds in every vector lattice.
Applying the norm monotonicity axiom yields~(i).  For~(ii), use the identities
\[
    x \vee y = \tfrac{1}{2}\bigl(x + y + \lvert x - y \rvert\bigr), \qquad
    x \wedge y = \tfrac{1}{2}\bigl(x + y - \lvert x - y \rvert\bigr),
\]
and combine~(i) with the triangle inequality.
\end{proof}

\begin{example}\label{ex:classical-lattices}
The following are standard examples of Banach lattices, each with the pointwise
(respectively, pointwise a.e.)\ order:
\begin{enumerate}[label=(\alph*)]
    \item $C(K; \R)$ for any compact Hausdorff space~$K$;
    \item $L_p(X, \mathcal{A}, \mu; \R)$ for $p \in [1, \infty)$ and a
      $\sigma$-finite measure space $(X, \mathcal{A}, \mu)$;
    \item $\ell_p(\omega; \R)$ for $p \in [1, \infty)$.
\end{enumerate}
\end{example}

\begin{definition}\label{def:sublattice-ideal}
Let $E$ be a Banach lattice.
\begin{enumerate}[label=(\roman*)]
    \item A closed linear subspace $F \subseteq E$ is a \emph{sublattice} if
      $\lvert y \rvert \in F$ for every $y \in F$.
    \item A sublattice $J \subseteq E$ is an \emph{order ideal} (or simply
      \emph{ideal}) if it is \emph{solid}: whenever $x, y \in E$ satisfy
      $\lvert x \rvert \preceq \lvert y \rvert$ and $y \in J$, one has $x \in J$.
    \item For a non-zero element $u \succeq \mathbf{0}_E$, the \emph{principal
      ideal} generated by~$u$ is
      $J_u = \{y \in E \colon \exists\, r > 0 \text{ with }
      \lvert y \rvert \preceq r u\}$.
    \item The \emph{positive cone} is
      $E^+ = \{x \in E \colon x \succeq \mathbf{0}_E\}$.
    \item $E$ is \emph{Dedekind complete} if every non-empty, order-bounded
      subset of $E$ has a supremum in~$E$.  When we refer to the
      \emph{Dedekind completion} $\hat E$ of a Banach lattice, we mean the
      usual Dedekind completion of the underlying Archimedean vector lattice,
      equipped with the lattice norm used in the cited source.  In this paper
      $\hat E$ is used only through quoted results on the Fatou and Nakano
      properties.
\end{enumerate}
\end{definition}

We work with the following two categories:
\begin{itemize}
    \item $\mathfrak{ban}_s$: separable real Banach spaces with contractive
      (i.e., norm~$\le 1$) linear operators as morphisms.
    \item $\mathfrak{banlat}_s$: separable real Banach lattices with contractive
      lattice homomorphisms as morphisms.
\end{itemize}

\subsection{Free Banach lattices}\label{subsec:FBL}

For every Banach space $X$, the \emph{free Banach lattice generated by~$X$},
denoted $\FBL[X]$, is characterised by the following universal property: there
exists an isometric embedding $\delta_X \colon X \to \FBL[X]$ such that, for
every Banach lattice $E$ and every contractive linear operator
$T \colon X \to E$, there exists a unique contractive lattice homomorphism
$\hat{T} \colon \FBL[X] \to E$ satisfying $\hat{T} \circ \delta_X = T$:
\[
    \begin{tikzcd}
    \FBL[X] \arrow[rd, "\hat{T}"]  & \\
    X \arrow[r, "T"'] \arrow[u, "\delta_X"] & E
    \end{tikzcd}
\]
Moreover, $\lVert \hat{T} \rVert = \lVert T \rVert$.

The assignment $X \mapsto \FBL[X]$ extends to a covariant functor from
$\mathfrak{ban}_s$ to $\mathfrak{banlat}_s$.

An explicit construction is given by Avil\'es, Rodr\'\i guez, and
Tradacete~\cite{aviles2018free}.  Let
\[
    H[X] = \{f \colon X' \to \R \colon f \text{ is positively homogeneous}\},
\]
equipped with the pointwise order, and define
\[
    \lVert f \rVert_{\FBL[X]}
    = \sup\biggl\{\sum_{i=1}^m \lvert f(x_i^*) \rvert \colon
      m \in \N,\; \{x_i^*\}_{i=1}^m \subseteq X',\;
      \sup_{x \in \overline{B}_X} \sum_{i=1}^m \lvert x_i^*(x) \rvert
      \le 1 \biggr\},
\]
where $\overline{B}_X$ is the closed unit ball of~$X$.  Set
$H_1[X] = \{f \in H[X] \colon \lVert f \rVert_{\FBL[X]} < \infty\}$.
The evaluation map $\delta_X \colon X \to H_1[X]$, defined by
$\delta_X(x)(x^*) = x^*(x)$, is a linear isometry, and $\FBL[X]$ is the closure
of the sublattice generated by $\delta_X(X)$ in~$H_1[X]$.

The following facts are proved in~\cite{aviles2018free}:
\begin{enumerate}[label=(\alph*)]
    \item $\FBL[\ell_1(\omega; \R)]$ is isometrically lattice-isomorphic to
      $\FBL[\omega]$, the free Banach lattice generated by the set~$\omega$.
    \item $\FBL[\ell_1(\omega; \R)]$ is generated (as a Banach lattice) by the
      countable set
      $\{\delta_{e_j}\}_{j \in \omega}$, where $\{e_j\}_{j \in \omega}$ is the
      standard basis of~$\ell_1$.
    \item Every separable Banach lattice $E$ is isometrically lattice-isomorphic
      to a quotient $\FBL[\ell_1(\omega; \R)]/J$ for some closed ideal~$J$.
      More explicitly, choose a norm-one quotient map
      $q\colon \ell_1(\omega;\R)\to E$ such that, for every $y\in E$ and
      every $\eta>0$, there is $x\in\ell_1$ with $q x=y$ and
      $\|x\|\le \|y\|+\eta$.  The universal property gives a contractive
      lattice homomorphism $\widehat q\colon \FBL[\ell_1]\to E$ extending
      $q$.  Since $q(\ell_1)=E$, the map $\widehat q$ is onto; if
      $J=\ker \widehat q$, then the quotient norm on
      $\FBL[\ell_1]/J$ agrees with the given norm of $E$.
\end{enumerate}
Consequently, \(\FBL[\ell_1]\) is the separable quotient generator for the
Banach-lattice category in the sense of Kania's Definition~3.11; compare
\cite[Remark~5.1(c)]{Kan2026Pol}.

\subsection{Universal embedding}

Let $\Delta = \{0,1\}^\omega$ denote the Cantor set.
By a result of Leung, Li, Oikhberg, and Tursi~\cite{LLOT2019Uni}, the Banach
lattice
\[
    C\bigl(\Delta;\, L_1(0,1; \R)\bigr),
\]
with the supremum norm and pointwise a.e.\ order, is injectively universal for
separable Banach lattices: every separable Banach lattice admits a lattice
isometric embedding as a closed sublattice of this space.

\begin{notation}
For brevity, we write $\mathcal{C} = C(\Delta; L_1)$, where
$L_1 = L_1(0,1; \R)$, and $\FBL[\ell_1]$ for $\FBL[\ell_1(\omega; \R)]$.
\end{notation}

\section{Coding spaces of separable Banach lattices}\label{sec:coding}

\subsection{Spaces of closed subspaces}\label{subsec:SB}

Let $X$ be a separable Banach space.  Denote by $\SB(X) \subseteq \mathcal{F}(X)$
the set of all closed linear subspaces of~$X$, where $\mathcal{F}(X)$ carries an
admissible topology satisfying~\ref{A1}--\ref{A3}
(for instance, the Wijsman topology).

The following is proved in~\cite{God2018Cod} (see
also~\cite[Chapter~2]{Dod2010Ban}).

\begin{proposition}\label{prop:SB-Polish}
$\SB(X)$ is a $\boldsymbol{\Pi}^0_2$~subset of $\mathcal{F}(X)$, and hence a
Polish space in the subspace topology.
\end{proposition}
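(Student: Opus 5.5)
We will show directly that $\SB(X)$ is $\boldsymbol{\Pi}^0_2$ in $\mathcal{F}(X)$, working first with the Wijsman topology and then transferring to an arbitrary admissible topology satisfying \ref{A1}--\ref{A3}. Polishness then follows from Alexandrov's theorem, since $\mathcal{F}(X)$ is Polish. Throughout, fix a countable dense subset $D=\{\alpha_n\}_{n\in\omega}$ of $X$, closed under rational linear combinations.

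For a non-empty closed set $F$, being a linear subspace is equivalent to two conditions: $0\in F$, and $\lambda x+\mu y\in F$ whenever $x,y\in F$ and $\lambda,\mu\in\bbQ$ (closure under real combinations then follows by continuity and closedness). The first condition is closed, since $F\mapsto d(0,F)$ is Wijsman-continuous by Example~\ref{ex:wijsman}. For the second, we avoid quantifying over points of $F$ and instead use the distance functions, which are continuous in $(x,F)$. The key reformulation is this: $F$ is closed under $(x,y)\mapsto \lambda x+\mu y$ if and only if, for all $a,b\in D$,
\[
  d(\lambda a+\mu b,F)\le |\lambda|\,d(a,F)+|\mu|\,d(b,F).
\]
To see necessity, pick $x,y\in F$ nearly realising $d(a,F)$ and $d(b,F)$. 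Then $\lambda x+\mu y\in F$ lies within $|\lambda|d(a,F)+|\mu|d(b,F)+\varepsilon$ of $\lambda a+\mu b$. For sufficiency, given $x,y\in F$, approximate them by $a,b\in D$. The inequality together with $1$-Lipschitzness of $d(\cdot,F)$ gives that $d(\lambda x+\mu y,F)$ is arbitrarily small, hence zero.

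Each such inequality, for fixed $a,b,\lambda,\mu$, defines a Wijsman-closed set, because the map $F\mapsto d(z,F)$ is continuous. So $\SB(X)$ is a countable intersection of closed sets, and hence in fact closed in the Wijsman topology; in particular it is $\boldsymbol{\Pi}^0_2$.

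The main obstacle is passing to an arbitrary admissible topology $\tau$. Wijsman-closed sets are in general only $\boldsymbol{\Pi}^0_2$ in $\tau$ (open sets move to $\boldsymbol{\Sigma}^0_2$), so the comparison remark alone gives the bound we need but no better. Concretely, each set $\{F: d(z,F)<r\}=E^+(B(z,r))$ is $\tau$-open by \ref{A1}, so $F\mapsto d(z,F)$ is $\tau$-upper semicontinuous. The inequalities above compare an upper semicontinuous quantity with a sum of upper semicontinuous quantities, and therefore need not be closed. I would write each inequality as $\forall q\in\bbQ\,[\,d(\lambda a+\mu b,F)<q \text{ or } |\lambda|d(a,F)+|\mu|d(b,F)\ge q\,]$. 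Here the first alternative is open and the second is a $G_\delta$ set (a sum of upper semicontinuous functions is upper semicontinuous, so its superlevel sets are closed, hence $G_\delta$). The whole condition is therefore a countable intersection of $G_\delta$ sets, which is $\boldsymbol{\Pi}^0_2$ in $\tau$, as required.
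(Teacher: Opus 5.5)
Your proof is correct. It takes a somewhat different route from the paper, which gives no argument of its own and simply cites Godefroy--Saint-Raymond (and Dodos).

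The closest in-paper argument is the proof of Corollary~\ref{cor:ambient-complexity}. There one first gets closedness in the Wijsman topology $\tau_W$, then invokes the comparison theorem for admissible topologies (every $\tau_W$-open set is $\boldsymbol{\Sigma}^0_2$ in $\tau$) to obtain a $\boldsymbol{\Pi}^0_2$ bound in an arbitrary admissible $\tau$. Your first half, the inequality $d(\lambda a+\mu b,F)\le|\lambda|\,d(a,F)+|\mu|\,d(b,F)$, supplies exactly that Wijsman-closedness. It uses that $d$ is the norm metric, which is the standard choice for the Wijsman topology on a Banach space, and for $\tau_W$ it is sharper than the proposition.

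Your second half then bypasses the comparison theorem. Since $\{F\colon d(z,F)<r\}=E^+(B(z,r))$, condition~\ref{A1} alone makes each $F\mapsto d(z,F)$ upper semicontinuous. Each inequality is then a countable intersection of sets of the form open $\cup$ closed, hence $G_\delta$. This buys self-containedness and generality: the conclusion holds for any Polish topology on $\mathcal{F}(X)$ satisfying~\ref{A1}. You never use \ref{A2}, \ref{A3}, or the Godefroy--Saint-Raymond comparison theorem. The comparison route, by contrast, is a one-line transfer that applies uniformly to every Wijsman-closed set. That is why the paper uses it for $\Subl(E)$ and $\Ideal(E)$.

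Two harmless redundancies. The condition $0\in F$ is already the case $\lambda=\mu=0$ of your inequalities. Also, $D$ need not be closed under rational combinations, since $F\mapsto d(z,F)$ is continuous (respectively upper semicontinuous) for every $z\in X$, not just for $z\in D$.
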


A key tool is the following consequence of Michael's continuous selection theorem;
see~\cite[Theorem~4.1]{God2018Cod}.

\begin{theorem}[Godefroy--Saint-Raymond]\label{thm:selection}
Let $X$ be a separable Banach space.  There exist continuous functions
$f_j \colon \SB(X) \to X$, $j \in \omega$, such that for every
$F \in \SB(X)$ one has
\[
    f_j(F) \in F \qquad (j \in \omega),
\]
and
\[
    F = \overline{\{f_j(F) \colon j \in \omega\}}.
\]
\end{theorem}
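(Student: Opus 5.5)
The plan is to derive the statement from Michael's continuous selection theorem, applied to the lower semicontinuous multifunction $F \mapsto F$ on $\SB(X)$. Throughout, $\SB(X)$ carries the Wijsman topology. This is harmless: it is admissible and satisfies~\ref{A3}, and by Proposition~\ref{prop:SB-Polish} the space $\SB(X)$ is Polish, hence paracompact.

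\textbf{Step 1: lower semicontinuity.} We check that $\Phi(F)=F$ is lower semicontinuous with non-empty closed convex values; each $F$ contains $0$, so the values are non-empty. Fix an open $U\subseteq X$. Then
\[
\{F\in\SB(X): \Phi(F)\cap U\neq\emptyset\}=E^+(U)\cap\SB(X),
\]
which is open by~\ref{A1}. Concretely, for $x\in F\cap U$ with $B(x,r)\subseteq U$, the Wijsman-open set $\{G: d(x,G)<r\}$ contains $F$ and lies in $E^+(U)$. Since $X$ is a Banach (Fréchet) space and $\SB(X)$ is paracompact, Michael's theorem gives continuous selections of $\Phi$.

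\textbf{Step 2: a dense family of selections.} We fix a dense sequence $(\alpha_n)$ in $X$ and rationals $r>0$. For each pair $(n,r)$ we want a continuous $f_{n,r}\colon\SB(X)\to X$ with $f_{n,r}(F)\in F$ always, and $\|f_{n,r}(F)-\alpha_n\|<2r$ whenever $d(\alpha_n,F)<r$.

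\emph{Construction.} Let $W=\{F: d(\alpha_n,F)<r\}$. This set is open and closed-in-nothing-special; note that $W$ itself is not closed. On $W$, the multifunction
\[
\Psi(F)=\overline{F\cap B(\alpha_n,r)}
\]
is lower semicontinuous by the same $E^+$ argument, since $F\cap B(\alpha_n,r)$ is an open-set intersection. Its values are closed, convex and non-empty. To get a globally defined selection, I would use the standard device instead: by Michael's theorem for the lsc map with closed convex values on a closed subset, extend from a closed set. Precisely, set
\[
K=\{F: d(\alpha_n,F)\le r/2\}.
\]
This set is closed because $F\mapsto d(\alpha_n,F)$ is continuous. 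Define $\Phi'(F)=\Psi(F)$ on $K$ and $\Phi'(F)=F$ off $K$. A cleaner route is Michael's extension form: a continuous selection $g$ of $\Psi$ on the closed set $K$, extended to a selection of $\Phi$ on all of $\SB(X)$. This works because Michael's theorem allows extending selections from closed subsets. Then $f_{n,r}$ is that extension.

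\textbf{Step 3: enumeration and density.} We enumerate $\{f_{n,r}\}$ as $(f_j)_{j\in\omega}$. Clearly $f_j(F)\in F$. For density, take $x\in F$ and $\varepsilon>0$. Choose a rational $r<\varepsilon$ and $n$ with $\|\alpha_n-x\|<r/2$. Then $F\in K$, so $\|f_{n,r}(F)-x\|<r+r/2<2\varepsilon$. Hence $F=\overline{\{f_j(F)\}}$.

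\textbf{Main obstacle.} The delicate points are in Step 2. First, lower semicontinuity of $\Psi$ restricted to $K$: openness of $\{F\in K: F\cap B(\alpha_n,r)\cap U\neq\emptyset\}$ relative to $K$ is again an $E^+$-type set, so this should be fine. Second, the extension form of Michael's theorem needs the global map $\Phi$ to be lsc with closed convex values in a Banach space and the domain to be paracompact. Both hold here, since the domain is metrisable.
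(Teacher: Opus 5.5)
Your route is the standard one and matches the paper's. The paper gives no proof of its own; it quotes the statement from Godefroy--Saint-Raymond as a consequence of Michael's selection theorem, and your Steps 1--3 are that argument: lower semicontinuity of $F\mapsto F$ from \ref{A1}, localised selections of $F\mapsto\overline{F\cap B(\alpha_n,r)}$ on a closed set, extension to a selection of $F\mapsto F$, and a countable enumeration. For the Wijsman topology the proof is correct, with two cosmetic remarks.

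First, the construction does not give the bound announced at the start of Step~2 ($\|f_{n,r}(F)-\alpha_n\|<2r$ whenever $d(\alpha_n,F)<r$). It gives only $\|f_{n,r}(F)-\alpha_n\|\le r$ for $F\in K$, but that is all Step~3 uses. Second, the map $\Phi'$ you set aside is already lower semicontinuous. Indeed, $\{F:\Phi'(F)\cap U\neq\emptyset\}=E^+(B(\alpha_n,r)\cap U)\cup\bigl(E^+(U)\setminus K\bigr)$, intersected with $\SB(X)$, and this is open because $K$ is closed. So a single application of Michael's theorem already produces $f_{n,r}$.

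The one unjustified claim is that fixing the Wijsman topology is \emph{harmless}. In the paper, $\SB(X)$ carries an arbitrary admissible topology satisfying \ref{A1}--\ref{A3}, and continuity of the $f_j$ depends on which one is chosen. Your reason (Wijsman is admissible and satisfies \ref{A3}) only shows that Wijsman is one legitimate choice. Your proof uses it essentially: $K=\{F:d(\alpha_n,F)\le r/2\}$ is closed because $F\mapsto d(\alpha_n,F)$ is lower semicontinuous. Condition \ref{A1} alone only makes this map upper semicontinuous, via $\{F:d(\alpha_n,F)<s\}=E^+(B(\alpha_n,s))$, so $K$ need not be closed in a general admissible topology.

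The repair uses nothing but \ref{A1} and metrisability of $\SB(X)$.
\begin{itemize}
\item Write the open set $W_{n,r}=E^+(B(\alpha_n,r))\cap\SB(X)$ as a countable union of closed sets $K_{n,r,k}$.
\item Run your Step~2 on each $K_{n,r,k}$; $\Psi$ still has non-empty values there. This gives selectors $f_{n,r,k}$ with $\|f_{n,r,k}(F)-\alpha_n\|\le r$ for $F\in K_{n,r,k}$.
\item In Step~3 choose $n$ with $\|\alpha_n-x\|<r$. Then $F\in W_{n,r}$ lies in some $K_{n,r,k}$, and $\|f_{n,r,k}(F)-x\|<2r$.
\end{itemize}
Condition \ref{A3} is not needed for this theorem at all.
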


\subsection{Closed sublattices}\label{subsec:subl}

Let $E$ be a separable Banach lattice and let $f_j \colon \SB(E) \to E$,
$j \in \omega$, be the continuous selections from Theorem~\ref{thm:selection}.

\begin{proposition}\label{prop:subl}
A closed subspace $F \in \SB(E)$ is a sublattice of~$E$ if and only if
\begin{equation}\label{eq:subl-cond}
    \lvert f_j(F) \rvert \in F \qquad \text{for all } j \in \omega.
\end{equation}
Moreover, $\Subl(E)$ is a closed ($\boldsymbol{\Pi}^0_1$) subset of $\SB(E)$
and hence a Polish space.
\end{proposition}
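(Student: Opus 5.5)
The plan is to prove the equivalence first and then read off closedness from the equivalence together with the continuity of the selections and condition~\ref{A3}.

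For the equivalence, the forward direction is immediate: if $F$ is a sublattice then each $f_j(F)\in F$ by Theorem~\ref{thm:selection}, so $\lvert f_j(F)\rvert\in F$. For the converse, assume \eqref{eq:subl-cond} and fix $y\in F$. By Theorem~\ref{thm:selection}, $\{f_j(F)\}_j$ is dense in $F$, so there is a sequence $j_k$ with $f_{j_k}(F)\to y$. By Lemma~\ref{lem:lattice-ops-lipschitz}(i),
\[
\bigl\lVert\, \lvert f_{j_k}(F)\rvert - \lvert y\rvert \,\bigr\rVert_E \le \lVert f_{j_k}(F)-y\rVert_E \to 0 .
\]
Since each $\lvert f_{j_k}(F)\rvert$ lies in $F$ and $F$ is closed, $\lvert y\rvert\in F$. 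Because $F$ is already a closed linear subspace, $F$ is a sublattice.

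For the topological claim, I would write
\[
\Subl(E)=\bigcap_{j\in\omega}\bigl\{F\in\SB(E)\colon \lvert f_j(F)\rvert\in F\bigr\}.
\]
For each $j$, the map $\Phi_j\colon \SB(E)\to E\times\mathcal F(E)$, $\Phi_j(F)=(\lvert f_j(F)\rvert,F)$, is continuous: $f_j$ is continuous by Theorem~\ref{thm:selection}, the modulus is continuous by Lemma~\ref{lem:lattice-ops-lipschitz}, and the second coordinate is the inclusion. By~\ref{A3}, the membership relation $\{(x,F)\colon x\in F\}$ is closed in $E\times\mathcal F(E)$, so each set in the intersection is $\Phi_j^{-1}$ of a closed set, hence closed in $\SB(E)$. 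A countable intersection of closed sets is closed. Finally, $\SB(E)$ is Polish by Proposition~\ref{prop:SB-Polish}, and a closed subset of a Polish space is Polish.

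The step needing the most care is this use of~\ref{A3}. The closedness of membership depends on the chosen admissible topology satisfying~\ref{A3}, which is assumed in Subsection~\ref{subsec:SB}; for the Wijsman topology it is verified in Example~\ref{ex:wijsman}. Without~\ref{A3} one would only obtain a $\boldsymbol{\Pi}^0_2$ bound, so I would state explicitly that the topology on $\SB(E)$ is one satisfying~\ref{A1}--\ref{A3}. The density argument in the converse direction is otherwise routine.
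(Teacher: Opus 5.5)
Your proposal is correct and follows essentially the same route as the paper. The equivalence comes from density of the selectors together with the Lipschitz estimate for the modulus. Closedness comes from writing $\Subl(E)$ as a countable intersection of preimages of the membership relation (closed by~\ref{A3}) under the continuous maps $F\mapsto(\lvert f_j(F)\rvert,F)$.
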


\begin{proof}
If $F$ is a sublattice then~\eqref{eq:subl-cond} holds trivially.  Conversely,
suppose~\eqref{eq:subl-cond} holds.  For any $y \in F$, choose indices
$(j_n)_{n \in \omega}$ such that $f_{j_n}(F) \to y$.  By
Lemma~\ref{lem:lattice-ops-lipschitz}\,(i) and~\eqref{eq:subl-cond}, the
corresponding moduli lie in~$F$ and converge to $\lvert y \rvert$, so
$\lvert y \rvert \in F$ by closedness.

For the descriptive complexity, write
\[
    \Subl(E) = \bigcap_{j \in \omega}
    \bigl\{F \in \SB(E) \colon \lvert f_j(F) \rvert \in F\bigr\}.
\]
The map $F \mapsto \bigl(\lvert f_j(F) \rvert,\, F\bigr) \in E \times \SB(E)$
is continuous (using Lemma~\ref{lem:lattice-ops-lipschitz}\,(i) and continuity
of~$f_j$), and condition~\ref{A3} ensures that
$\{(x, F) \colon x \in F\}$ is closed.  Hence each set in the intersection is
closed, and their intersection is closed.
\end{proof}

Taking $E = \mathcal{C}$ yields a Polish space
$\Subl(\mathcal{C})$ that serves as a coding space for
separable Banach lattices: every separable Banach lattice is isometrically
lattice-isomorphic to some element of this space.

\subsection{Closed order ideals}\label{subsec:ideal}

Kania explicitly notes that Banach lattices fit the quotient-kernel framework,
and his general admissible-kernel theorem applies after taking \(\vee\),
\(\wedge\), and \(|\cdot|\) as the continuous operations and closed lattice ideals
as kernels; see \cite[Remark~3.5 and Proposition~3.6]{Kan2026Pol}.  The next
theorem gives a direct lattice proof in our notation and, in particular, records
the explicit solidity test used later.

\begin{theorem}\label{thm:ideal}
Let $E$ be a separable Banach lattice and let
$f_j \colon \SB(E) \to E$, $j \in \omega$, be the continuous selections from
Theorem~\ref{thm:selection}.  Fix a dense sequence $(g_k)_{k \in \omega}$
in~$E$.

Then $F \in \Ideal(E)$ if and only if $F \in \SB(E)$ and:
\begin{enumerate}[label=\textup{(I\arabic*)}]
    \item\label{I1} $\lvert f_j(F) \rvert \in F$ for all $j \in \omega$
      \quad\textup{(}sublattice condition\textup{)};
    \item\label{I2} $\lvert g_k \rvert \wedge \lvert f_j(F) \rvert \in F$
      for all $j, k \in \omega$
      \quad\textup{(}solidity tested on dense elements\textup{)}.
\end{enumerate}
Moreover, $\Ideal(E)$ is a closed ($\boldsymbol{\Pi}^0_1$) subset of $\SB(E)$
and hence a Polish space.
\end{theorem}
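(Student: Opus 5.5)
The plan is to prove the two implications separately and then read off closedness exactly as in Proposition~\ref{prop:subl}.

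\emph{Necessity.} If $F$ is a closed ideal, then \ref{I1} holds because $F$ is a sublattice. For \ref{I2}, note that $0\preceq \lvert g_k\rvert\wedge\lvert f_j(F)\rvert\preceq \lvert f_j(F)\rvert$, and $f_j(F)\in F$. Solidity therefore gives \ref{I2}.

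\emph{Sufficiency.} Assume \ref{I1} and \ref{I2}.
\begin{enumerate}[label=(\arabic*)]
\item By Proposition~\ref{prop:subl}, condition \ref{I1} already makes $F$ a closed sublattice.
\item Extend \ref{I2} to arbitrary elements. Fix $x\in E$ and $y\in F$. Choose $g_{k_n}\to x$ and $f_{j_n}(F)\to y$; this is possible by density of $(g_k)$ and by Theorem~\ref{thm:selection}. Lemma~\ref{lem:lattice-ops-lipschitz} gives
\[
\lvert g_{k_n}\rvert\wedge\lvert f_{j_n}(F)\rvert\to \lvert x\rvert\wedge\lvert y\rvert .
\]
Since $F$ is closed, $\lvert x\rvert\wedge\lvert y\rvert\in F$ for all $x\in E$ and $y\in F$.
\item Deduce solidity. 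Suppose $\lvert x\rvert\preceq\lvert y\rvert$ with $y\in F$. Then $\lvert x\rvert=\lvert x\rvert\wedge\lvert y\rvert\in F$. Also $x^+=\lvert x\rvert\wedge\lvert x\rvert^{\phantom{|}}$ is not the right formula, so I would instead use $x^+=x^+\wedge\lvert y\rvert$, since $x^+\preceq\lvert x\rvert\preceq\lvert y\rvert$. Now $x^+=\lvert x^+\rvert$, so step (2) applied to the element $x^+\in E$ gives $x^+\in F$. Similarly $x^-\in F$, hence $x=x^+-x^-\in F$.
\end{enumerate}
This step (3) is the only point needing care: passing from $\lvert x\rvert\in F$ to $x\in F$. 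It is resolved by applying the extended \ref{I2} to $x^{\pm}$, which is legitimate because step (2) holds for every $x\in E$, not only for the dense $g_k$.

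\emph{Closedness.} For fixed $j,k$, the map
\[
F\mapsto\bigl(\lvert g_k\rvert\wedge\lvert f_j(F)\rvert,\;F\bigr)
\]
is continuous from $\SB(E)$ into $E\times\SB(E)$, by continuity of $f_j$ and Lemma~\ref{lem:lattice-ops-lipschitz}. By \ref{A3}, the preimage of the membership relation is closed. The \ref{I1} sets are closed by the same argument, as in Proposition~\ref{prop:subl}. Hence $\Ideal(E)$ is a countable intersection of closed subsets of $\SB(E)$, so it is closed. It is Polish as a closed subset of the Polish space $\SB(E)$ (Proposition~\ref{prop:SB-Polish}).
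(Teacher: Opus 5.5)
Your proof is correct and follows the paper's route: necessity from solidity; sufficiency by extending the dense test (I2) to arbitrary elements through continuity of $\wedge$ and closedness of $F$, then applying it to $x^{\pm}$; and closedness from continuity of the selections together with (A3). The differences are only cosmetic. You take a single diagonal limit that approximates $y$ itself, whereas the paper takes an iterated limit approximating $v=\lvert y\rvert$, so your version incidentally does not need (I1) for solidity. You should also delete the abandoned false start ``$x^+=\lvert x\rvert\wedge\lvert x\rvert$'' in step (3).
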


\begin{proof}
\emph{Necessity.}
If $F$ is a closed ideal then it is, in particular, a closed sublattice,
so~\ref{I1} holds.
For~\ref{I2}, note that
$\lvert g_k \rvert \wedge \lvert f_j(F) \rvert \preceq \lvert f_j(F) \rvert$
and $f_j(F) \in F$, hence solidity gives
$\lvert g_k \rvert \wedge \lvert f_j(F) \rvert \in F$.

\medskip
\emph{Sufficiency.}
Assume~\ref{I1} and~\ref{I2}.  By Proposition~\ref{prop:subl},
condition~\ref{I1} implies that $F$ is a closed sublattice of~$E$.

We verify solidity.  Let $x \in E$ and $y \in F$ satisfy
$\lvert x \rvert \preceq \lvert y \rvert$.  Set $v := \lvert y \rvert \in F^+$
(the modulus lies in $F$ by~\ref{I1}).  It suffices to show that every
$u \in E^+$ with $u \preceq v$ belongs to~$F$, because then we may apply this
to $u = x^+$ and $u = x^-$ (both satisfy
$\mathbf{0}_E \preceq x^\pm \preceq \lvert x \rvert \preceq v$) and conclude
$x = x^+ - x^- \in F$.

Let $u \in E^+$ with $u \preceq v$.  Choose indices $(k_n)_{n \in \omega}$ such that
$g_{k_n} \to u$ in norm.  Then $\lvert g_{k_n} \rvert \to \lvert u \rvert = u$
by Lemma~\ref{lem:lattice-ops-lipschitz}\,(i).  Choose also indices
$(j_m)_{m \in \omega}$ such that
$f_{j_m}(F) \to v$; then
$\lvert f_{j_m}(F) \rvert \to \lvert v \rvert = v$.

By~\ref{I2}, we have $\lvert g_{k_n} \rvert \wedge \lvert f_{j_m}(F) \rvert \in F$
for all $m, n$.  Fixing $n$ and letting $m \to \infty$, the Lipschitz continuity
of $\wedge$ (Lemma~\ref{lem:lattice-ops-lipschitz}\,(ii)) and closedness
of~$F$ yield $\lvert g_{k_n} \rvert \wedge v \in F$.  Letting $n \to \infty$,
we have $\lvert g_{k_n} \rvert \wedge v \to u \wedge v$ by continuity of $\wedge$
(Lemma~\ref{lem:lattice-ops-lipschitz}\,(ii)).  Since $F$ is closed and each
$\lvert g_{k_n} \rvert \wedge v$ belongs to~$F$, it follows that
$u \wedge v \in F$.  Since $u \preceq v$, we have
$u \wedge v = u$, hence $u \in F$.

\medskip
\emph{Descriptive complexity.}
We may write
\[
    \Ideal(E)
    = \bigcap_{j \in \omega}
      \bigl\{F \in \SB(E) \colon \lvert f_j(F) \rvert \in F\bigr\}
    \;\cap\;
    \bigcap_{j,k \in \omega}
      \bigl\{F \in \SB(E) \colon
      \lvert g_k \rvert \wedge \lvert f_j(F) \rvert \in F\bigr\}.
\]
Each set in both families is closed in~$\SB(E)$: the map
$F \mapsto \bigl(\lvert f_j(F) \rvert, F\bigr)$ (respectively,
$F \mapsto \bigl(\lvert g_k \rvert \wedge \lvert f_j(F) \rvert, F\bigr)$) is
continuous by Lemma~\ref{lem:lattice-ops-lipschitz} and continuity of~$f_j$,
and~\ref{A3} makes the membership relation $\{(x, F) \colon x \in F\}$ closed.
Hence $\Ideal(E)$ is a countable intersection of closed sets, which is closed.
\end{proof}

Theorem~\ref{thm:ideal} is the Banach-lattice version of the admissible-kernel
closedness theorem in Kania's quotient framework~\cite[Proposition~3.6]{Kan2026Pol}.
The proof above is included because it identifies the abstract kernel condition
with the concrete closed-order-ideal tests used later.

\begin{remark}\label{rem:positive-dense}
Since $E^+$ is a closed subset of a separable metric space, it is separable.
Thus one may, if desired, fix the dense sequence $(g_k)$ in
Theorem~\ref{thm:ideal} inside~$E^+$.  In that case condition~\ref{I2} reads
simply $g_k \wedge \lvert f_j(F) \rvert \in F$ for all $j, k$.
\end{remark}

\begin{corollary}\label{cor:coding-Polish}
For any separable Banach lattice $E$, the space $\Subl(E)$ is a closed
($\boldsymbol{\Pi}^0_1$) subset of $\SB(E)$, and $\Ideal(E)$ is a closed
($\boldsymbol{\Pi}^0_1$) subset of $\SB(E)$.  In particular, both are
Polish spaces.
\end{corollary}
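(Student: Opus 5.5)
This corollary is essentially a repackaging of Proposition~\ref{prop:subl} and Theorem~\ref{thm:ideal}, so the plan is to quote them and then supply the passage from ``closed in $\SB(E)$'' to ``Polish''. First, I would fix an admissible topology on $\mathcal{F}(E)$ satisfying \ref{A1}--\ref{A3}, say the Wijsman topology of Example~\ref{ex:wijsman}. This is the topology in which Theorem~\ref{thm:selection} provides the continuous selections $f_j$ and in which \ref{A3} holds. Proposition~\ref{prop:subl} then says directly that $\Subl(E)$ is a $\boldsymbol{\Pi}^0_1$ subset of $\SB(E)$. Theorem~\ref{thm:ideal}, applied with any dense sequence $(g_k)$ in $E$, says the same for $\Ideal(E)$. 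Such a sequence exists because $E$ is separable; by Remark~\ref{rem:positive-dense} it may even be taken in $E^+$.

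Next, for Polishness, I would use Proposition~\ref{prop:SB-Polish}: $\SB(E)$ is $\boldsymbol{\Pi}^0_2$ in the Polish space $\mathcal{F}(E)$, hence Polish by Alexandrov's theorem. A closed subset of a Polish space is Polish in the relative topology, since the restriction of a complete metric to a closed set is still complete and separability passes to subspaces of separable metric spaces. Hence $\Subl(E)$ and $\Ideal(E)$ are Polish. One could also note that both are $\boldsymbol{\Pi}^0_2$ in $\mathcal{F}(E)$, as intersections of a closed subset of $\SB(E)$ with a $G_\delta$.

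The only point needing care is that the closedness statements are relative to $\SB(E)$ and to the chosen admissible topology with \ref{A3}. For a different admissible topology without \ref{A3}, membership is only $\boldsymbol{\Pi}^0_2$, and closedness could fail. So I would state the corollary with the Wijsman (or any \ref{A3}-satisfying) topology fixed. I would also remark that the Effros--Borel structure, and hence the Borel class up to one level, does not depend on this choice. I expect no real obstacle here; the substantive work was already done in the proofs of Proposition~\ref{prop:subl} and Theorem~\ref{thm:ideal}.
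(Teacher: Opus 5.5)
Your proposal is correct and matches the paper. The corollary simply restates the closedness assertions already proved in Proposition~\ref{prop:subl} and Theorem~\ref{thm:ideal}, relative to an admissible topology satisfying \ref{A1}--\ref{A3} such as the Wijsman topology, with Polishness inherited from the Polish space $\SB(E)$ of Proposition~\ref{prop:SB-Polish}; your caveat about fixing a topology with \ref{A3} agrees with how the paper handles other admissible topologies in Corollary~\ref{cor:ambient-complexity}.
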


\begin{corollary}\label{cor:ambient-complexity}
Let $E$ be a separable Banach lattice and endow $\mathcal{F}(E)$ with any
admissible topology.  Then $\Subl(E)$ and $\Ideal(E)$ are
$\boldsymbol{\Pi}^0_2$ subsets of $\mathcal{F}(E)$.
\end{corollary}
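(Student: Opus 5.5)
The plan is to combine Corollary~\ref{cor:coding-Polish} with Proposition~\ref{prop:SB-Polish}, using the fact that relative complexity composes. Fix an admissible topology $\tau$ on $\mathcal{F}(E)$. A closed subset of a $G_\delta$ subset is again $G_\delta$: if $A\subseteq B$ with $A$ relatively closed in $B$ and $B\in\boldsymbol{\Pi}^0_2(\mathcal{F}(E))$, then $A=B\cap C$ for some closed $C\subseteq\mathcal{F}(E)$. Since closed sets are $G_\delta$ in a metrisable space, $A$ is an intersection of two $G_\delta$ sets, hence $\boldsymbol{\Pi}^0_2$.

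There is one subtlety to handle first. Corollary~\ref{cor:coding-Polish} was proved with $\mathcal{F}(E)$ carrying an admissible topology satisfying \ref{A1}--\ref{A3}, such as the Wijsman topology. The present statement, however, allows an arbitrary admissible topology, which need not satisfy \ref{A3}. I would therefore argue directly for arbitrary admissible $\tau$, rather than trying to transfer the result between topologies. Transfer would cost a level, since a $\tau_1$-open set is only $\boldsymbol{\Sigma}^0_2$ in $\tau_2$, so a $\boldsymbol{\Pi}^0_2$ set could become $\boldsymbol{\Pi}^0_3$.

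The direct argument runs as follows.
\begin{enumerate}
\item Proposition~\ref{prop:SB-Polish}, from \cite{God2018Cod}, gives that $\SB(E)$ is $\boldsymbol{\Pi}^0_2$ in $\mathcal{F}(E)$ for every admissible topology. That result is stated in that generality.
\item Express the sublattice and solidity conditions without selections: $F\in\SB(E)$ is a sublattice (respectively an ideal) iff for all $n$ (respectively all $n,k$), every open ball $U$ from a fixed countable family satisfies the following. If $F\cap U\neq\emptyset$, then $F$ meets every open neighbourhood $V$ of $|U|$ (respectively of $|g_k|\wedge|U|$) of a suitable form.
\end{enumerate}

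Concretely, I will show that $F$ is a sublattice iff for all basic balls $B(a,r)$ with $a$ in a countable dense set and $r$ rational,
\[
F\cap B(a,r)\neq\emptyset \implies F\cap B(|a|,2r)\neq\emptyset .
\]
Necessity follows from Lemma~\ref{lem:lattice-ops-lipschitz}(i): if $y\in F\cap B(a,r)$ then $|y|\in F\cap B(|a|,r)$. For sufficiency, given $y\in F$, pick $a$ with $\|a-y\|<r$. Then $|a|$ is within $r$ of $|y|$, so $\dist(|y|,F)<3r$. Letting $r\to 0$ gives $|y|\in F$.

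Each implication defines the set $\mathcal{F}(E)\setminus E^+(U)\cup E^+(V)$. By \ref{A1} and the fact that complements of open sets are closed, this set is the union of a closed set and an open set, hence $\boldsymbol{\Delta}^0_2$ and in particular $\boldsymbol{\Pi}^0_2$. A countable intersection of such sets, intersected with $\SB(E)$, is therefore $\boldsymbol{\Pi}^0_2$. The ideal case is identical, adding the condition
\[
F\cap B(a,r)\neq\emptyset\implies F\cap B(|g_k|\wedge|a|,2r)\neq\emptyset ,
\]
and then invoking the equivalence with \ref{I1}--\ref{I2} from Theorem~\ref{thm:ideal}. The approximation argument there, which uses density and closedness, carries over verbatim.

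The main obstacle is exactly this independence from \ref{A3}, together with checking the $3r$ approximation in sufficiency.
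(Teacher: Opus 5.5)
Your argument is correct, but it takes a different route from the paper.

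\textbf{The paper's route.} The paper works through the Wijsman topology $\tau_W$. By Corollary~\ref{cor:coding-Polish}, which rests on the continuous selections and on \ref{A3}, it writes $\Subl(E)=\SB(E)\cap C$ and $\Ideal(E)=\SB(E)\cap C'$ with $C,C'$ closed in $\tau_W$. The comparison theorem for admissible topologies then makes $C$ and $C'$ $\boldsymbol{\Pi}^0_2$ in $\tau$, and one intersects with $\SB(E)$.

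\textbf{Your worry about transfer.} Your concern that transferring between topologies ``costs a level'' does not arise in the paper's argument. Only the closed witness set is transferred, and a $\tau_W$-closed set becomes $\boldsymbol{\Pi}^0_2$ in $\tau$, which is exactly the level needed. A level is lost only if one transfers a set that is already $\boldsymbol{\Pi}^0_2$.

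\textbf{Your route.} You write the sublattice and solidity conditions as countably many implications
\[
F\in E^+\bigl(B(a,r)\bigr)\Rightarrow F\in E^+\bigl(B(\lvert a\rvert,2r)\bigr)
\quad\text{resp.}\quad
F\in E^+\bigl(B(\lvert g_k\rvert\wedge\lvert a\rvert,2r)\bigr).
\]
Each is a union of a $\tau$-closed and a $\tau$-open set. Your $3r$ estimate is correct. Combined with Lemma~\ref{lem:lattice-ops-lipschitz}(ii) for the $\wedge$ term, it gives $\lvert g_k\rvert\wedge\lvert y\rvert\in F$ for every $y\in F$, hence \ref{I2}; the equivalence in Theorem~\ref{thm:ideal} is purely set-theoretic, so it applies.

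\textbf{Comparison.} Your argument uses only \ref{A1} and metrisability, with no selections, no \ref{A3}, and no comparison theorem. It is therefore more elementary and works for any Polish topology on $\mathcal{F}(E)$ in which the hit sets $E^+(U)$ are open. One small correction: the paper states Proposition~\ref{prop:SB-Polish} under \ref{A1}--\ref{A3}, not for every admissible topology as you assert. Your own device repairs this. Impose $F\cap B(0,r)\neq\emptyset$ for all rational $r$, and for $a,b$ in the dense set and rational $\lambda,\mu,r$ impose
\[
F\cap B(a,r)\neq\emptyset,\ F\cap B(b,r)\neq\emptyset
\ \Rightarrow\
F\cap B\bigl(\lambda a+\mu b,(\lvert\lambda\rvert+\lvert\mu\rvert+1)r\bigr)\neq\emptyset .
\]
This shows directly that $\SB(E)$ is $\boldsymbol{\Pi}^0_2$ in any such topology, making your proof fully self-contained. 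The paper's argument is shorter, given the selection-based machinery it has already built.
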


\begin{proof}
Let $\tau_W$ denote the Wijsman topology on $\mathcal{F}(E)$, and let $\tau$
be an arbitrary admissible topology on $\mathcal{F}(E)$.

By Proposition~\ref{prop:subl} and Theorem~\ref{thm:ideal}, the sets
$\Subl(E)$ and $\Ideal(E)$ are closed in $\SB(E)$ when $\SB(E)$ is viewed as a
subspace of $(\mathcal{F}(E),\tau_W)$.  Hence there exist $\tau_W$-closed sets
$C_{\Subl}, C_{\Ideal} \subseteq \mathcal{F}(E)$ such that
\[
    \Subl(E)=\SB(E)\cap C_{\Subl},
    \qquad
    \Ideal(E)=\SB(E)\cap C_{\Ideal}.
\]
For instance, one may take
$C_{\Subl}=\overline{\Subl(E)}^{\,\tau_W}$ and
$C_{\Ideal}=\overline{\Ideal(E)}^{\,\tau_W}$.

By the comparison theorem for admissible topologies
(\cite[Section~2]{God2018Cod}), every $\tau_W$-open set is
$\boldsymbol{\Sigma}^0_2$ with respect to $\tau$.  Equivalently, the identity
map
\[
    \mathrm{id}\colon (\mathcal{F}(E),\tau)\longrightarrow
    (\mathcal{F}(E),\tau_W)
\]
is of Baire class~$1$.  Therefore the preimage of every $\tau_W$-closed set is
a $\boldsymbol{\Pi}^0_2$ subset of $(\mathcal{F}(E),\tau)$.

In particular, both $C_{\Subl}$ and $C_{\Ideal}$ are
$\boldsymbol{\Pi}^0_2$ in $(\mathcal{F}(E),\tau)$.  Since $\SB(E)$ is
$\boldsymbol{\Pi}^0_2$ in $(\mathcal{F}(E),\tau)$ by
Proposition~\ref{prop:SB-Polish}, it follows that
\[
    \Subl(E)=\SB(E)\cap C_{\Subl},
    \qquad
    \Ideal(E)=\SB(E)\cap C_{\Ideal}
\]
are $\boldsymbol{\Pi}^0_2$ subsets of $(\mathcal{F}(E),\tau)$.
\end{proof}

Taking $E = \FBL[\ell_1]$, the space $\Ideal\bigl(\FBL[\ell_1]\bigr)$ is a
Polish space that encodes all separable Banach lattices (up to isometric lattice
isomorphism) via the quotient representation $E \cong \FBL[\ell_1] / J$.
This is precisely the Banach-lattice quotient coding of
\cite[Main Theorem~3.13 and Remark~5.1(c)]{Kan2026Pol}: the Wijsman coordinates
$J\mapsto\|x+J\|$ are continuous, and the full quotient-norm code is faithful as
a code of kernels, although different kernels may still give isomorphic
quotients.  Similarly, taking $E = \mathcal{C}$, the space
$\Subl(\mathcal{C})$ provides an alternative embedding-based coding.

\subsection{A Borel ideal-hull operator}\label{subsec:ideal-hull}

Let $E$ be a separable Banach lattice, fix continuous selections
$f_j \colon \SB(E) \to E$ as in Theorem~\ref{thm:selection}, and fix a dense
sequence $(g_k)_{k \in \omega}$ in~$E$.

\begin{proposition}\label{prop:ideal-hull}
For $F \in \Subl(E)$ define
\[
    \Gamma(F)
    := \overline{\mathrm{span}}\Bigl\{\,
    \lvert g_k \rvert \wedge \lvert f_j(F) \rvert
    \colon j, k \in \omega \,\Bigr\}.
\]
Then $\Gamma(F)$ is the smallest closed ideal of $E$ containing~$F$
(i.e.\ the closed ideal generated by~$F$).  Moreover, the map
$\Gamma \colon \Subl(E) \to \Ideal(E)$ is Borel measurable.
\end{proposition}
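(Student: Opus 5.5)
The plan is to show that $\Gamma(F)$ coincides with $I(F)$, the smallest closed ideal containing $F$, by proving two inclusions. Measurability will then follow from Lemma~\ref{lem:closure-map-borel}. Throughout, write $h_{jk}(F) := \lvert g_k\rvert\wedge\lvert f_j(F)\rvert$.

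\emph{The inclusion $\Gamma(F)\subseteq I(F)$.} This is immediate. We have $0\preceq h_{jk}(F)\preceq \lvert f_j(F)\rvert$ and $f_j(F)\in F\subseteq I(F)$, so solidity of $I(F)$ gives $h_{jk}(F)\in I(F)$. Since $I(F)$ is a closed linear subspace, it contains the closed span $\Gamma(F)$.

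\emph{The inclusion $I(F)\subseteq\Gamma(F)$.} The first step is to show that $u\wedge w\in\Gamma(F)$ for every $u\in E^+$ and every $w\in F^+$. Choose $g_{k_n}\to u$ and $f_{j_m}(F)\to w$; the latter is possible by Theorem~\ref{thm:selection}. Lemma~\ref{lem:lattice-ops-lipschitz} gives $h_{j_m k_n}(F)\to \lvert g_{k_n}\rvert\wedge w$ as $m\to\infty$, and then $\lvert g_{k_n}\rvert\wedge w\to u\wedge w$ as $n\to\infty$. Closedness of $\Gamma(F)$ finishes this step.

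Next, consider the algebraic ideal generated by $F$:
\[
I_0=\Bigl\{x : \lvert x\rvert\preceq \textstyle\sum_i c_i\lvert y_i\rvert \text{ for some } c_i\ge0,\ y_i\in F\Bigr\}.
\]
Because $F$ is a sublattice and a linear subspace, the element $w=\sum_i c_i\lvert y_i\rvert$ itself lies in $F^+$. Hence $I_0=\{x:\lvert x\rvert\preceq w \text{ for some } w\in F^+\}$. For such $x$ we have $0\preceq x^\pm\preceq w$, so $x^\pm=x^\pm\wedge w\in\Gamma(F)$ by the first step, and therefore $x=x^+-x^-\in\Gamma(F)$. Thus $F\subseteq I_0\subseteq\Gamma(F)$.

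Finally, the norm closure of an ideal in a Banach lattice is again an ideal. This is standard: if $\lvert x\rvert\preceq\lvert y\rvert$ and $y_n\to y$ with $y_n\in I_0$, then $(x^+\wedge\lvert y_n\rvert)-(x^-\wedge\lvert y_n\rvert)\in I_0$ converges to $x$ by Lemma~\ref{lem:lattice-ops-lipschitz}. Hence $\overline{I_0}$ is a closed ideal containing $F$, so $I(F)\subseteq\overline{I_0}\subseteq\Gamma(F)$. I expect this inclusion to be the main obstacle. A closed span of positive elements is not solid in general, so solidity must come from this identification rather than directly from the definition of $\Gamma$. The key trick is the reduction, via $F$ being a sublattice, to a single dominating $w\in F^+$. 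In particular $\Gamma(F)\in\Ideal(E)$.

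\emph{Borel measurability.} Enumerate all finite rational linear combinations of the $h_{jk}$ as $\varphi_n(F)=\sum_{i} q_{n,i}\,h_{j_{n,i}k_{n,i}}(F)$. Each $\varphi_n\colon\Subl(E)\to E$ is continuous, by continuity of the $f_j$ and Lemma~\ref{lem:lattice-ops-lipschitz}. Hence $\Phi=(\varphi_n)_n\colon\Subl(E)\to E^\omega$ is continuous. Since rational combinations are dense in the real span, $\Gamma=\mathrm{Cl}_E\circ\Phi$. This composite is Borel by Lemma~\ref{lem:closure-map-borel}, because a continuous map followed by a Borel map is Borel. Since $\Gamma$ takes values in $\Ideal(E)$, and the Borel structure of $\Ideal(E)$ as a subspace is the trace of the Effros--Borel $\sigma$-algebra, $\Gamma\colon\Subl(E)\to\Ideal(E)$ is Borel.
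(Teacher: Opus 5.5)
Your proposal is correct and follows essentially the same route as the paper. Both use the same solidity argument for the inclusion into the smallest closed ideal, the same double approximation $\lvert g_{k_n}\rvert\wedge\lvert f_{j_m}(F)\rvert\to u\wedge w$ to place $\{x:\lvert x\rvert\preceq w \text{ for some } w\in F^+\}$ inside $\Gamma(F)$ before taking closures, and the same continuous enumeration of rational combinations composed with Lemma~\ref{lem:closure-map-borel}; the only difference is that you prove two standard facts the paper leaves implicit (that a single dominator in $F^+$ suffices because $F$ is a sublattice, and that the norm closure of an ideal is an ideal).
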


\begin{proof}
Let $\langle F \rangle_{\Ideal}$ denote the closed ideal generated by~$F$,
i.e.\ the norm-closure of the (not necessarily closed) ideal
\[
    I(F) := \{x \in E \colon \exists\, y \in F \text{ with }
    \lvert x \rvert \preceq \lvert y \rvert\}.
\]

\emph{$\Gamma(F) \subseteq \langle F \rangle_{\Ideal}$.}
For each $j$ we have $f_j(F) \in F$, hence $\lvert f_j(F) \rvert \in F$ because
$F$ is a sublattice.  Since $\langle F \rangle_{\Ideal}$ is an ideal containing
$F$ and $\lvert g_k \rvert \wedge \lvert f_j(F) \rvert \preceq \lvert f_j(F) \rvert$,
solidity gives
$\lvert g_k \rvert \wedge \lvert f_j(F) \rvert \in \langle F \rangle_{\Ideal}$
for all $j, k$.  Thus $\Gamma(F) \subseteq \langle F \rangle_{\Ideal}$.

\medskip
\emph{$I(F) \subseteq \Gamma(F)$.}
Let $x \in I(F)$, so $\lvert x \rvert \preceq v$ for some $v \in F^+$.
Write $x = x^+ - x^-$, where $x^\pm \in E^+$ and
$\mathbf{0}_E \preceq x^\pm \preceq \lvert x \rvert \preceq v$.  Since
$\Gamma(F)$ is a linear subspace, it suffices to show that every $u \in E^+$
with $u \preceq v$ belongs to $\Gamma(F)$.

Fix such~$u$.  Choose indices $(k_n)_{n \in \omega}$ such that $g_{k_n} \to u$ in norm.
Then $\lvert g_{k_n} \rvert \to u$ by
Lemma~\ref{lem:lattice-ops-lipschitz}\,(i).  Also choose indices
$(j_m)_{m \in \omega}$ such that $f_{j_m}(F) \to v$; then
$\lvert f_{j_m}(F) \rvert \to v$.

For each fixed $n$ we have
$\lvert g_{k_n} \rvert \wedge \lvert f_{j_m}(F) \rvert
\to \lvert g_{k_n} \rvert \wedge v$ as $m \to \infty$ by continuity of
$\wedge$, so (since $\Gamma(F)$ is closed and contains all
$\lvert g_{k_n} \rvert \wedge \lvert f_{j_m}(F) \rvert$) we get
$\lvert g_{k_n} \rvert \wedge v \in \Gamma(F)$.  Letting $n \to \infty$ and
using closedness again yields $u \wedge v \in \Gamma(F)$.  Since
$u \preceq v$, we have $u \wedge v = u$, so $u \in \Gamma(F)$.

Thus $x^\pm \in \Gamma(F)$ and hence $x \in \Gamma(F)$, proving
$I(F) \subseteq \Gamma(F)$.

\medskip
\emph{Equality.}
Taking norm-closures gives
$\langle F \rangle_{\Ideal} \subseteq \Gamma(F)$.  Combined with the first
inclusion this yields $\Gamma(F) = \langle F \rangle_{\Ideal}$.

\medskip
\emph{Borel measurability.}
Fix an enumeration of all finite rational linear combinations of the vectors
$\lvert g_k \rvert \wedge \lvert f_j(F) \rvert$: for each $m \in \omega$ let
\[
    t_m(F) = \sum_{l=1}^{n(m)} q_l(m)
    \bigl(\lvert g_{k_l(m)} \rvert \wedge \lvert f_{j_l(m)}(F) \rvert\bigr),
    \qquad q_l(m) \in \bbQ.
\]
Each $t_m$ is continuous in $F \in \Subl(E)$ by
Lemma~\ref{lem:lattice-ops-lipschitz} and continuity of the selections~$f_j$.
Hence $F \mapsto (t_m(F))_{m \in \omega}$ is continuous from $\Subl(E)$ to
$E^\omega$.  By Lemma~\ref{lem:closure-map-borel} (applied to $X = E$), the
map $(x_m)_m \mapsto \overline{\{x_m \colon m \in \omega\}}$ is Borel from
$E^\omega$ to $\mathcal{F}(E)$, so
$F \mapsto \overline{\{t_m(F)\}} = \Gamma(F)$ is Borel.  Finally,
$\Ideal(E)$ is a Borel subset of $\mathcal{F}(E)$ (indeed closed
in~$\SB(E)$), so $\Gamma$ is Borel as a map into $\Ideal(E)$.
\end{proof}

\subsection{Borel generation of sublattices and ideals}\label{subsec:borel-gen}

\begin{proposition}\label{prop:borel-generated-sublattice}
Let $E$ be a separable Banach lattice.  Fix an enumeration
$(\tau_m)_{m \in \omega}$ of all \emph{lattice terms with rational coefficients}
in finitely many variables, built from variables using addition, scalar
multiplication by elements of~$\bbQ$, and the modulus operation $\lvert\cdot\rvert$.

Define $G_{\Subl} \colon E^\omega \to \Subl(E)$ by
\[
    G_{\Subl}\bigl((x_n)_{n \in \omega}\bigr)
    := \overline{\{\tau_m((x_n)_{n \in \omega}) \colon m \in \omega\}}.
\]
Then $G_{\Subl}$ is Borel measurable, and $G_{\Subl}((x_n))$ is precisely the
closed sublattice of $E$ generated by $\{x_n \colon n \in \omega\}$.

Consequently, with $\Gamma$ as in Proposition~\ref{prop:ideal-hull}, the map
\[
    G_{\Ideal} := \Gamma \circ G_{\Subl}
    \colon E^\omega \to \Ideal(E)
\]
is Borel measurable and returns the closed ideal generated by
$\{x_n \colon n \in \omega\}$.
\end{proposition}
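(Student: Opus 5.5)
The proof splits into three steps: identify $G_{\Subl}((x_n))$ algebraically, prove Borel measurability by composing continuous maps with $\mathrm{Cl}_E$, and then handle $G_{\Ideal}$ via Proposition~\ref{prop:ideal-hull}.

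\emph{Identification.} Let $S$ be the set of values $\tau_m((x_n))$, and let $L$ be the closed sublattice generated by $\{x_n\}$.

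First, $\overline{S}\subseteq L$. By induction on term complexity every $\tau_m((x_n))$ lies in $L$, because $L$ is closed under addition, scalar multiplication and $\lvert\cdot\rvert$. Since $L$ is closed, the closure $\overline S$ is contained in $L$.

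Second, $\overline S$ is itself a closed sublattice containing every $x_n$; minimality of $L$ then gives $L\subseteq\overline S$.
\begin{itemize}
\item The variables are terms, so each $x_n\in S$.
\item $S$ is closed under addition, under $\bbQ$-scalar multiplication and under $\lvert\cdot\rvert$, since composing terms gives terms.
\item Passing to the closure: addition and the modulus are continuous (Lemma~\ref{lem:lattice-ops-lipschitz}(i)), so $\overline S$ is closed under sums and moduli. Density of $\bbQ$ in $\R$ upgrades $\bbQ$-scalars to real scalars.
\end{itemize}
Hence $\overline S$ is a closed linear subspace stable under the modulus, and the two inclusions give $\overline S=L$. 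In particular $G_{\Subl}$ really takes values in $\Subl(E)$.

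\emph{Borel measurability of $G_{\Subl}$.} For each fixed $m$, the map $(x_n)\mapsto\tau_m((x_n))$ depends on only finitely many coordinates. It is a finite composition of addition, rational scaling and the modulus, all continuous on $E$, so it is continuous from $E^\omega$ to $E$. Therefore $\Phi\colon(x_n)\mapsto(\tau_m((x_n)))_{m}$ is continuous from $E^\omega$ to $E^\omega$, and $G_{\Subl}=\mathrm{Cl}_E\circ\Phi$. By Lemma~\ref{lem:closure-map-borel}, $G_{\Subl}$ is Borel into $\mathcal F(E)$ with its Effros--Borel structure. Since $\Subl(E)$ is a Borel subset of $\mathcal F(E)$ (Corollary~\ref{cor:ambient-complexity}) and the Borel structure of the Polish subspace is the trace of the Effros--Borel structure, $G_{\Subl}$ is Borel as a map into $\Subl(E)$.

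\emph{The ideal map $G_{\Ideal}$.} Proposition~\ref{prop:ideal-hull} shows that $\Gamma$ is Borel and that $\Gamma(F)$ is the closed ideal generated by $F$, so the composition $G_{\Ideal}=\Gamma\circ G_{\Subl}$ is Borel. It remains to check that it returns the closed ideal generated by $\{x_n\}$. Every closed ideal is a closed sublattice, so any closed ideal containing $\{x_n\}$ contains $L$. Conversely, any closed ideal containing $L$ contains $\{x_n\}$. The two families of closed ideals therefore coincide, and so do their intersections, giving the claim.

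The only point needing care is matching topologies on the target $\Subl(E)$. This is handled by the fact that any admissible topology yields the Effros--Borel $\sigma$-algebra, whose trace on the Borel set $\Subl(E)$ is the Borel $\sigma$-algebra of the subspace.
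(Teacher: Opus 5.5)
Your proof is correct and follows essentially the same route as the paper's: continuity of the term-evaluation map composed with the Borel closure map of Lemma~\ref{lem:closure-map-borel}, the two-inclusion identification of $G_{\Subl}((x_n))$ with the generated closed sublattice, and Proposition~\ref{prop:ideal-hull} for $G_{\Ideal}$. Your additional checks make explicit points the paper leaves implicit: upgrading rational scalars to real ones, restricting the codomain to the Borel set $\Subl(E)$, and showing that the closed ideal generated by $L$ equals the one generated by $\{x_n\}$.
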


\begin{proof}
Each term $\tau_m$ depends on finitely many coordinates and is obtained by
composing continuous operations on~$E$ (addition, rational scalar
multiplication, and modulus; see Lemma~\ref{lem:lattice-ops-lipschitz}).
Therefore the evaluation map
$E^\omega \ni (x_n) \mapsto \tau_m((x_n)) \in E$ is continuous for each~$m$,
and so
\[
    E^\omega \ni (x_n) \longmapsto
    \bigl(\tau_m((x_n))\bigr)_{m \in \omega} \in E^\omega
\]
is continuous for the product topology.

By Lemma~\ref{lem:closure-map-borel}, the closure map
$(z_m)_m \mapsto \overline{\{z_m \colon m \in \omega\}}$ is Borel from
$E^\omega$ to $\mathcal{F}(E)$, hence $G_{\Subl}$ is Borel.

Let
\[
    A((x_n)) := \{\tau_m((x_n)_{n \in \omega}) \colon m \in \omega\}.
\]
Because the family of lattice terms is closed under addition, rational scalar
multiplication, and modulus, the set $A((x_n))$ is a rational sublattice of $E$.
Since these operations are continuous on $E$
(Lemma~\ref{lem:lattice-ops-lipschitz}),
its norm-closure
\[
    \overline{A((x_n))}
    = G_{\Subl}((x_n)_{n \in \omega})
\]
is a closed sublattice of $E$.

Let $L$ be the smallest closed sublattice containing
$\{x_n \colon n \in \omega\}$.  Since each variable is a term, we have
$x_n \in \{\tau_m((x_n))\}$, so $G_{\Subl}((x_n))$ is a closed sublattice
containing $\{x_n\}$, hence $L \subseteq G_{\Subl}((x_n))$.  Conversely, any
sublattice containing $\{x_n\}$ contains the values of all lattice terms in the
$x_n$; by closedness it contains $G_{\Subl}((x_n))$.  Thus
$G_{\Subl}((x_n)) = L$.

The statement about $G_{\Ideal}$ follows immediately from
Proposition~\ref{prop:ideal-hull}.
\end{proof}

\section{Borel complexity of the Fremlin tensor product}\label{sec:tensor}

\subsection{Fremlin tensor product}

Let $E$ and $F$ be Banach lattices.  We regard the algebraic tensor product
$E \otimes F$ as canonically embedded in the Fremlin vector lattice tensor
product of $E$ and $F$, so that lattice operations such as $\lvert u \rvert$
are understood there.  The \emph{positive projective tensor norm} on
$E \otimes F$ is defined by
\[
    \lVert u \rVert_{|\pi|}
    = \inf\biggl\{\sum_{j=1}^n \lVert x_j \rVert_E \lVert y_j \rVert_F
    \colon n \in \N,\; x_j \in E^+,\; y_j \in F^+,\;
    \lvert u \rvert \preceq \sum_{j=1}^n x_j \otimes y_j\biggr\}.
\]
The completion of $(E \otimes F,\lVert\cdot\rVert_{|\pi|})$ is denoted by
$E \hat{\otimes}_{|\pi|} F$ and is called the \emph{Fremlin (projective)
tensor product} of $E$ and $F$.  It is a Banach lattice; see
\cite{Frem1974Ban} and \cite{Pug2007Fremlin}.

\subsection{Complexity of the tensor product map}

Let $\mathscr{X}$ denote the coding space $\Ideal\bigl(\FBL[\ell_1]\bigr)$,
constructed from the separable Banach lattice $E = \FBL[\ell_1]$.
Put $E_0 = E \hat{\otimes}_{|\pi|} E$, and let
$\mathscr{Y} = \Ideal(E_0)$ denote the corresponding coding space constructed
over~$E_0$.  The argument below follows the Wijsman-coordinate strategy of
Kania's tensor-kernel results for quotient-coded Banach-type structures:
fixed-coordinate distance or quotient-norm functions are first shown to be upper
semicontinuous, and Borel measurability is then read from the Wijsman subbasis;
compare \cite[Proposition~9.2 and Propositions~9.5--9.6]{Kan2026Pol}.  Here the
norm is the Fremlin positive projective tensor norm and the closedness input is
the ideal property for Banach lattices.

\begin{remark}\label{rem:ideals-not-subl}
We restrict our attention to the coding space of ideals rather than sublattices.
The Fremlin tensor product has the \emph{ideal property}: if $I \subseteq E$ and
$J \subseteq F$ are closed ideals, then $I \hat{\otimes}_{|\pi|} J$ identifies
canonically with a closed ideal of $E \hat{\otimes}_{|\pi|} F$; see, for example,
Nielsen~\cite{Nie1982Ideal} (cf.\ Fremlin~\cite{Frem1974Ban} and
Puglisi~\cite{Pug2007Fremlin}).
By contrast, the analogous statement for arbitrary closed sublattices fails in
general, so a direct tensor product map into a coding space of sublattices
of~$E_0$ is not available.
\end{remark}

\begin{lemma}\label{lem:dist-dense}
Let $(M, d)$ be a metric space and let $F \subseteq M$ be a non-empty closed set.
If $D \subseteq F$ is dense, then for every $x \in M$ one has
\[
    d(x, F) = \inf_{y \in D} d(x, y).
\]
\end{lemma}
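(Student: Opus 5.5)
We prove the two inequalities $d(x,F)\le \inf_{y\in D} d(x,y)$ and $\inf_{y\in D} d(x,y)\le d(x,F)$ separately, recalling that $d(x,F)=\inf_{z\in F} d(x,z)$.

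For the first inequality, note that $D\subseteq F$. An infimum over a smaller set can only be larger, so $d(x,F)=\inf_{z\in F}d(x,z)\le \inf_{y\in D}d(x,y)$. Since $F$ is non-empty and $D$ is dense in $F$, the set $D$ is non-empty as well, so both infima are finite real numbers.

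For the reverse inequality, fix $\varepsilon>0$. By definition of the infimum, choose $z\in F$ with $d(x,z)< d(x,F)+\varepsilon$. Since $D$ is dense in $F$, choose $y\in D$ with $d(z,y)<\varepsilon$. The triangle inequality then gives
\[
\inf_{y'\in D}d(x,y')\le d(x,y)\le d(x,z)+d(z,y)< d(x,F)+2\varepsilon .
\]
Letting $\varepsilon\downarrow 0$ yields $\inf_{y\in D}d(x,y)\le d(x,F)$, and combining the two inequalities proves the claim.

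There is no real obstacle here. The only points requiring care are that density is taken relative to $F$, which is what allows $z$ to be approximated within $D$, and that non-emptiness ensures both infima are over non-empty sets. Closedness of $F$ is not actually needed for this identity; it is assumed only so that $F\in\mathcal F(M)$ in the intended application to Wijsman coordinates. There $D$ will be the countable set $\{f_j(J)\}$ given by Theorem~\ref{thm:selection}, or rational combinations of it, and the lemma will allow $d(x,J)$ to be written as an infimum of continuous functions of the code, which is therefore upper semicontinuous.
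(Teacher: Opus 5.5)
Your proof is correct and follows the paper's argument essentially line for line: the easy inequality comes from $D\subseteq F$, and the reverse one from choosing $z\in F$ with $d(x,z)<d(x,F)+\varepsilon$, approximating $z$ by some $y\in D$, applying the triangle inequality, and letting $\varepsilon\downarrow 0$. Your side remarks are also accurate: $D\neq\emptyset$ follows from density in the non-empty set $F$, and closedness of $F$ is never used.
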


\begin{proof}
Clearly $\inf_{y \in D} d(x, y) \ge d(x, F)$.  Conversely, given
$\varepsilon > 0$ choose $z \in F$ with $d(x, z) < d(x, F) + \varepsilon$,
and then choose $y \in D$ with $d(y, z) < \varepsilon$.  Then
$d(x, y) \le d(x, z) + \varepsilon < d(x, F) + 2\varepsilon$.
Letting $\varepsilon \downarrow 0$ yields the reverse inequality.
\end{proof}

\begin{lemma}\label{lem:dense-tensor}
Let $E$ and $F$ be Banach lattices and let $A \subseteq E$, $B \subseteq F$
be dense.  Then the linear span of $A \otimes B$ is dense in
$E \hat{\otimes}_{|\pi|} F$.
\end{lemma}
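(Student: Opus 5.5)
The plan is to pass first from the completion to the algebraic tensor product, and then from elementary tensors to elementary tensors with factors in $A$ and $B$.

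\textbf{Step 1 (algebraic tensors are dense).} By definition, $E \hat{\otimes}_{|\pi|} F$ is the completion of $(E\otimes F,\lVert\cdot\rVert_{|\pi|})$. Hence $E\otimes F$, i.e.\ the span of the elementary tensors $x\otimes y$, is dense. It therefore suffices to approximate a single elementary tensor $x\otimes y$ by elements of $\mathrm{span}(A\otimes B)$: the closure of that span is a closed linear subspace, so it then contains all of $E\otimes F$, and hence everything.

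\textbf{Step 2 (the cross-norm estimate).} The key inequality is
\[
\lVert x\otimes y\rVert_{|\pi|}\le \lVert x\rVert_E\lVert y\rVert_F
\qquad (x\in E,\ y\in F).
\]
To obtain it, use that the Fremlin tensor product satisfies $\lvert x\otimes y\rvert=\lvert x\rvert\otimes\lvert y\rvert$. Then the single positive pair $(\lvert x\rvert,\lvert y\rvert)$ is admissible in the infimum defining $\lVert\cdot\rVert_{|\pi|}$. Since $\lVert\,\lvert x\rvert\,\rVert=\lVert x\rVert$, the bound follows.

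If one prefers not to quote the identity for the modulus, there is a cruder route. Write $x=x^+-x^-$ and $y=y^+-y^-$. Then
\[
x\otimes y = x^+\otimes y^+ + x^-\otimes y^- - x^+\otimes y^- - x^-\otimes y^+,
\]
and in the Fremlin vector lattice each positive tensor $x^\pm\otimes y^\pm$ is positive. Hence
\[
\lvert x\otimes y\rvert\preceq \lvert x\rvert\otimes\lvert y\rvert,
\]
which gives the same bound. Either route needs only that the norm is a cross seminorm bounded by the product of the norms; that is enough.

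\textbf{Step 3 (approximation).} Given $x\otimes y$ and $\varepsilon>0$, pick $a\in A$ with $\lVert x-a\rVert<\delta$ and $b\in B$ with $\lVert y-b\rVert<\delta$. Bilinearity gives
\[
x\otimes y-a\otimes b=(x-a)\otimes y+a\otimes(y-b).
\]
Step 2 and the triangle inequality then bound the norm of this difference by
\[
\delta\lVert y\rVert+(\lVert x\rVert+\delta)\,\delta,
\]
which is below $\varepsilon$ for $\delta$ small.

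\textbf{Main obstacle.} The only nontrivial point is Step 2, namely the upper cross-norm property of $\lVert\cdot\rVert_{|\pi|}$, which rests on the positivity of $\otimes$ in Fremlin's construction. I would quote it from \cite{Frem1974Ban}, where it is shown that $\lVert x\otimes y\rVert_{|\pi|}=\lVert x\rVert\lVert y\rVert$, rather than reprove it.
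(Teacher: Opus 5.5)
Your proposal is correct and follows essentially the same route as the paper: the cross-norm bound $\lVert x\otimes y\rVert_{|\pi|}\le\lVert x\rVert_E\lVert y\rVert_F$, the splitting $x\otimes y-a\otimes b=(x-a)\otimes y+a\otimes(y-b)$, and density of $E\otimes F$ in the completion. The differences are minor. You approximate a single elementary tensor and then use that the closure of the span is a closed subspace, whereas the paper approximates a finite sum $\sum_{l=1}^k x_l\otimes y_l$ directly with $\varepsilon/(4k)$ bookkeeping. You also sketch why the cross-norm bound holds, via positivity of $\otimes$ giving $\lvert x\otimes y\rvert\preceq\lvert x\rvert\otimes\lvert y\rvert$, where the paper simply cites Fremlin.
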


\begin{proof}
The Fremlin norm is a lattice cross norm, hence for all $x \in E$ and $y \in F$,
\begin{equation}\label{eq:crossnorm-est}
    \lVert x \otimes y \rVert_{|\pi|}
    \le \lVert x \rVert_E \, \lVert y \rVert_F;
\end{equation}
see, e.g., Fremlin~\cite{Frem1974Ban} (or Puglisi~\cite{Pug2007Fremlin}).
Consequently, the canonical bilinear map $(x, y) \mapsto x \otimes y$ is jointly
continuous.

Let $u = \sum_{l=1}^k x_l \otimes y_l \in E \otimes F$ and fix
$\varepsilon > 0$.  Choose $a_l \in A$ and $b_l \in B$ such that
\[
    \lVert x_l - a_l \rVert_E
    < \min\Bigl\{1,\;\frac{\varepsilon}{4k(\lVert y_l \rVert_F + 1)}\Bigr\},
    \qquad
    \lVert y_l - b_l \rVert_F
    < \frac{\varepsilon}{4k(\lVert x_l \rVert_E + 1)}.
\]
Then $\lVert a_l \rVert_E \le \lVert x_l \rVert_E + 1$.  Using
$x_l \otimes y_l - a_l \otimes b_l
= (x_l - a_l) \otimes y_l + a_l \otimes (y_l - b_l)$
and~\eqref{eq:crossnorm-est}, we obtain
\[
\begin{aligned}
    \lVert x_l \otimes y_l - a_l \otimes b_l \rVert_{|\pi|}
    &\le \lVert x_l - a_l \rVert_E \, \lVert y_l \rVert_F
       + \lVert a_l \rVert_E \, \lVert y_l - b_l \rVert_F \\
    &< \frac{\varepsilon}{4k} + \frac{\varepsilon}{4k}
    = \frac{\varepsilon}{2k}.
\end{aligned}
\]
Summing over $l = 1, \ldots, k$ gives
\[
    \biggl\lVert u - \sum_{l=1}^k a_l \otimes b_l \biggr\rVert_{|\pi|}
    < \varepsilon.
\]
Since finite sums are dense in the completion $E \hat{\otimes}_{|\pi|} F$, the
linear span of $A \otimes B$ is dense in $E \hat{\otimes}_{|\pi|} F$.
\end{proof}

\begin{theorem}\label{thm:tensor}
The map
\[
    \Xi \colon \mathscr{X} \times \mathscr{X} \ni (J_1, J_2)
    \longmapsto J_1 \hat{\otimes}_{|\pi|} J_2 \in \mathscr{Y}
\]
is $\boldsymbol{\Sigma}^0_2$-measurable when $\mathscr{X}$ and $\mathscr{Y}$
carry the (subspace) Wijsman topologies.  In particular, $\Xi$ is Borel
measurable.  Moreover, $\Graph(\Xi)$ is a $G_\delta$ subset of
$\mathscr{X} \times \mathscr{X} \times \mathscr{Y}$.
\end{theorem}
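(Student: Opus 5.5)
Via the ideal property (Remark~\ref{rem:ideals-not-subl}), I identify $\Xi(J_1,J_2)$ with the closed ideal of $E_0$ generated by the elementary tensors $x\otimes y$, $x\in J_1$, $y\in J_2$. By Lemma~\ref{lem:dense-tensor}, applied to the dense sets $\{f_j(J_1)\}_j\subseteq J_1$ and $\{f_j(J_2)\}_j\subseteq J_2$ from Theorem~\ref{thm:selection}, this ideal is the closed linear span of $\{f_i(J_1)\otimes f_j(J_2)\colon i,j\in\omega\}$. Let $t_m(J_1,J_2)$ enumerate the finite rational linear combinations of these vectors. Each $t_m$ is continuous $\mathscr X\times\mathscr X\to E_0$, because the selections are continuous and $(x,y)\mapsto x\otimes y$ is jointly continuous by \eqref{eq:crossnorm-est}. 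Fix a dense sequence $(\alpha_n)$ in $E_0$. Lemma~\ref{lem:dist-dense} then gives
\[
    d\bigl(\alpha_n,\Xi(J_1,J_2)\bigr)=\inf_{m}\bigl\lVert \alpha_n-t_m(J_1,J_2)\bigr\rVert_{|\pi|}.
\]
So each Wijsman coordinate $\phi_n:=d(\alpha_n,\Xi(\cdot,\cdot))$ is an infimum of continuous functions, and hence upper semicontinuous on $\mathscr X\times\mathscr X$.

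The $\boldsymbol{\Sigma}^0_2$-measurability comes from the Wijsman subbase, whose elements are $\{K\colon d(\alpha_n,K)<r\}$ and $\{K\colon d(\alpha_n,K)>r\}$. Their preimages are $\{\phi_n<r\}$, which is open by upper semicontinuity, and $\{\phi_n>r\}=\bigcup_{q>r,\,q\in\bbQ}\{\phi_n\ge q\}$, which is $F_\sigma$. A basic open set is a finite intersection of subbasic sets, so its preimage is $\boldsymbol{\Sigma}^0_2$. Every open set of $\mathscr Y$ is a countable union of basic sets, because $\mathscr Y$ is second countable, so its preimage is again $\boldsymbol{\Sigma}^0_2$. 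Borel measurability follows at once.

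For the graph, I use that $K=\Xi(J_1,J_2)$ holds iff $d(\alpha_n,K)=\phi_n(J_1,J_2)$ for all $n$, since the Wijsman embedding is injective. The inequality $d(\alpha_n,K)\le\phi_n(J_1,J_2)$ means $d(\alpha_n,K)\le\lVert\alpha_n-t_m(J_1,J_2)\rVert$ for every $m$. Each such condition is closed in $\mathscr X\times\mathscr X\times\mathscr Y$, so this inequality defines a closed set; it is equivalent to $\Xi(J_1,J_2)\subseteq K$, since both sets are closed. The reverse inequality $d(\alpha_n,K)\ge\phi_n(J_1,J_2)$ for all $n$ says $K\subseteq\Xi(J_1,J_2)$. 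I would write it as $\bigcap_{n}\bigcap_{q\in\bbQ}\bigl(\{d(\alpha_n,K)\ge q\}\cup\{\phi_n<q\}\bigr)$. Here $\{d(\alpha_n,K)\ge q\}$ is closed and $\{\phi_n<q\}$ is open, so each term is the union of a closed set and an open set, hence $G_\delta$ in a metrisable space, and the countable intersection is $G_\delta$. Intersecting with the closed set from the first inequality shows that $\Graph(\Xi)$ is $G_\delta$.

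The main obstacle is the first step, namely identifying $J_1\hat\otimes_{|\pi|}J_2$ inside $E_0$ as the closed span of elementary tensors from dense subsets. This needs the canonical map $J_1\hat\otimes_{|\pi|}J_2\to E_0$ to be isometric onto a closed ideal, which is exactly the ideal property quoted in Remark~\ref{rem:ideals-not-subl}. Lemma~\ref{lem:dense-tensor} then only has to be applied inside $J_1\hat\otimes_{|\pi|}J_2$, with its own norm agreeing with the restriction of the norm of $E_0$. I would check carefully that the cited ideal property supplies this isometry, and not merely an injective lattice homomorphism. If it supplies only the latter, I would instead define $\Xi(J_1,J_2)$ as the closure in $E_0$ of the image, and the argument above goes through unchanged.
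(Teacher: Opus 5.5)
Your proposal is correct and follows the paper's proof essentially step for step. It uses the same continuous rational combinations of $f_i(J_1)\otimes f_j(J_2)$, upper semicontinuity of the Wijsman coordinates via Lemma~\ref{lem:dist-dense}, the same subbasic-preimage computation for $\boldsymbol{\Sigma}^0_2$-measurability, and the same closed-set-plus-$G_\delta$ decomposition of $\{d(\alpha_n,K)=\varphi_n\}$ for the graph, resting on the ideal property exactly as the paper does. Your explicit appeal to second countability, so that open subsets of $\mathscr{Y}$ are countable unions of basic sets, fills in a point the paper leaves implicit.
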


\begin{proof}
Fix a compatible metric $d$ on $E_0$ and a dense sequence
$(\alpha_n)_{n \in \omega}$ in~$E_0$.  Recall that the Wijsman topology on
$\mathcal{F}(E_0)$ is the initial topology for the distance maps
$F \mapsto d(\alpha_n, F)$, $n \in \omega$.

\medskip
\emph{A continuous dense sequence of simple tensors.}
Let $f_j \colon \mathscr{X} \to E$, $j \in \omega$, be the continuous selections
from Theorem~\ref{thm:selection}.  For
$(J_1, J_2) \in \mathscr{X} \times \mathscr{X}$ and each index $m \in \omega$
coding a tuple $(k, q_1, \ldots, q_k, (i_1, j_1), \ldots, (i_k, j_k))$ with
$q_l \in \bbQ$, set
\[
    s_m(J_1, J_2)
    = \sum_{l=1}^k q_l \bigl(f_{i_l}(J_1) \otimes f_{j_l}(J_2)\bigr) \in E_0.
\]
Since the bilinear map $(x, y) \mapsto x \otimes y \colon E \times E \to E_0$ is
continuous (as a bounded bilinear map between Banach spaces) and addition and
scalar multiplication are continuous, each $s_m$ is continuous.  The set
$\{s_m(J_1, J_2) \colon m \in \omega\}$ is dense in
$J_1 \hat{\otimes}_{|\pi|} J_2$.  Indeed,
$\{f_i(J_1) \colon i \in \omega\}$ is dense in $J_1$ and
$\{f_j(J_2) \colon j \in \omega\}$ is dense in $J_2$ by
Theorem~\ref{thm:selection}, so Lemma~\ref{lem:dense-tensor} yields density of
the span of $\{f_i(J_1) \otimes f_j(J_2)\}$, and passing to rational
coefficients does not change the closure.

\medskip
\emph{Wijsman coordinates are upper semicontinuous.}
For each $n \in \omega$ consider
\[
    \varphi_n(J_1, J_2)
    := d\bigl(\alpha_n,\, J_1 \hat{\otimes}_{|\pi|} J_2\bigr)
    = \inf_{m \in \omega} d\bigl(\alpha_n,\, s_m(J_1, J_2)\bigr),
\]
by Lemma~\ref{lem:dist-dense}.  For each fixed $m$, the map
$(J_1, J_2) \mapsto d(\alpha_n, s_m(J_1, J_2))$ is continuous.  Hence $\varphi_n$
is the pointwise infimum of countably many continuous functions, and therefore
$\varphi_n$ is upper semicontinuous.

\medskip
\emph{$\boldsymbol{\Sigma}^0_2$-measurability of $\Xi$.}
A subbasic open set in the Wijsman topology on $\mathscr{Y}$ is of the form
\[
    U(n,r)^{<} = \{K \in \mathscr{Y} \colon d(\alpha_n, K) < r\}
    \quad\text{or}\quad
    U(n,r)^{>} = \{K \in \mathscr{Y} \colon d(\alpha_n, K) > r\},
\]
where $n \in \omega$ and $r \in \R$.  We have
\[
    \Xi^{-1}\bigl(U(n,r)^{<}\bigr)
    = \{(J_1, J_2) \colon \varphi_n(J_1, J_2) < r\},
\]
which is open because $\varphi_n$ is upper semicontinuous.  Moreover,
\[
    \Xi^{-1}\bigl(U(n,r)^{>}\bigr)
    = \{(J_1, J_2) \colon \varphi_n(J_1, J_2) > r\}
    = \bigcup_{\substack{q \in \bbQ \\ q > r}}
      \{(J_1, J_2) \colon \varphi_n(J_1, J_2) \ge q\},
\]
and each set $\{\varphi_n \ge q\}$ is closed since $\varphi_n$ is upper
semicontinuous.  Hence $\Xi^{-1}(U(n,r)^{>})$ is $F_\sigma$.
Since $\mathscr{X} \times \mathscr{X}$ is metrisable, every open subset of it is
also $F_\sigma$. Therefore the preimage of each subbasic Wijsman-open set is
$F_\sigma$, and so the preimage of every open subset of $\mathscr{Y}$ (being a
union of finite intersections of subbasic opens) is $F_\sigma$.
Thus $\Xi$ is $\boldsymbol{\Sigma}^0_2$-measurable.

\medskip
\emph{The graph is $G_\delta$.}
Fix $n \in \omega$ and define on
$\mathscr{X} \times \mathscr{X} \times \mathscr{Y}$ the functions
\[
    a_n(J_1, J_2, K) = d(\alpha_n, K), \qquad
    b_n(J_1, J_2, K) = \varphi_n(J_1, J_2).
\]
Then $a_n$ is continuous and $b_n$ is upper semicontinuous.  Let
\[
    A_n = \{(J_1, J_2, K) \colon a_n(J_1, J_2, K) = b_n(J_1, J_2, K)\}.
\]
Write $A_n = \{a_n \le b_n\} \cap \{b_n \le a_n\}$.
The set $\{a_n \le b_n\}$ is closed because its complement
$\{a_n > b_n\} = \bigcup_{q \in \bbQ} \{a_n > q\} \cap \{b_n < q\}$ is open.
Also, $\{b_n \le a_n\}$ is $G_\delta$ because its complement
\[
    \{b_n > a_n\}
    = \bigcup_{q \in \bbQ} \{a_n < q\} \cap \{b_n > q\}
\]
is $F_\sigma$: the sets $\{a_n < q\}$ are open, hence $F_\sigma$ in the
metrisable space $\mathscr{X} \times \mathscr{X} \times \mathscr{Y}$, while the
sets $\{b_n > q\}$ are $F_\sigma$ because $b_n$ is upper semicontinuous.
Therefore $A_n$ is $G_\delta$.  Finally,
\[
    \Graph(\Xi) = \bigcap_{n \in \omega} A_n,
\]
so $\Graph(\Xi)$ is $G_\delta$.
\end{proof}

\begin{corollary}\label{cor:tensor-sections}
Fix $J_1 \in \mathscr{X}$.  The section
\[
    \Xi_{J_1} \colon \mathscr{X} \to \mathscr{Y}, \qquad
    \Xi_{J_1}(J_2) = \Xi(J_1, J_2),
\]
is $\boldsymbol{\Sigma}^0_2$-measurable.  In particular, it is Borel measurable,
and it is continuous on a dense $G_\delta$ subset of $\mathscr{X}$.
The analogous statement holds for the sections $J_1 \mapsto \Xi(J_1, J_2)$
with $J_2$ fixed.
\end{corollary}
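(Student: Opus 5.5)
The plan is to reduce everything to Theorem~\ref{thm:tensor}, using the fact that restricting a $\boldsymbol{\Sigma}^0_2$-measurable map to a closed slice keeps it $\boldsymbol{\Sigma}^0_2$-measurable. Fix $J_1 \in \mathscr{X}$ and consider the map $\iota_{J_1}\colon \mathscr{X} \to \mathscr{X}\times\mathscr{X}$, $J_2 \mapsto (J_1,J_2)$. It is continuous, and $\Xi_{J_1} = \Xi\circ\iota_{J_1}$.

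For an open set $U \subseteq \mathscr{Y}$, Theorem~\ref{thm:tensor} gives that $\Xi^{-1}(U)$ is $F_\sigma$ in $\mathscr{X}\times\mathscr{X}$, say $\bigcup_i C_i$ with each $C_i$ closed. Then
\[
    \Xi_{J_1}^{-1}(U) = \bigcup_i \iota_{J_1}^{-1}(C_i),
\]
and each $\iota_{J_1}^{-1}(C_i)$ is closed by continuity of $\iota_{J_1}$. So $\Xi_{J_1}^{-1}(U)$ is $F_\sigma$, which proves $\boldsymbol{\Sigma}^0_2$-measurability. Alternatively, one can rerun the proof of Theorem~\ref{thm:tensor} directly: each $J_2\mapsto\varphi_n(J_1,J_2)$ is still an infimum of continuous functions of $J_2$, hence upper semicontinuous. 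Borel measurability follows immediately.

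For the continuity claim I will invoke the classical theorem that a $\boldsymbol{\Sigma}^0_2$-measurable map (equivalently, a Baire class~$1$ map) from a metrisable space into a separable metrisable space has a comeagre $G_\delta$ set of continuity points whenever the domain is Baire (Kechris, Theorem~24.14). The hypotheses hold here:
\begin{itemize}
    \item $\mathscr{X}$ is Polish by Corollary~\ref{cor:coding-Polish}, hence Baire;
    \item $\mathscr{Y}$ is Polish with the Wijsman topology, in particular separable metrisable, so the equivalence between $\boldsymbol{\Sigma}^0_2$-measurable and Baire class~$1$ (Lebesgue--Hausdorff--Banach) applies.
\end{itemize}
The set of continuity points of any map between metric spaces is always $G_\delta$, since it is $\bigcap_k\{x : \mathrm{osc}(x)<1/k\}$. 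Being comeagre in a Baire space, it is dense.

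The only step needing care is this appeal to the continuity-points theorem, specifically the measurability-to-Baire-class-$1$ equivalence for a separable metrisable target. I would cite it rather than reprove it. If a self-contained route is wanted, the direct argument works:
\begin{itemize}
    \item fix a countable base $\{V_n\}$ of $\mathscr{Y}$;
    \item write each preimage $\Xi_{J_1}^{-1}(V_n)$ as $F_\sigma$;
    \item note that the boundary of each closed piece is nowhere dense;
    \item observe that, off the resulting meagre $F_\sigma$ set, the map is continuous.
\end{itemize}
The symmetric statement for $J_1\mapsto\Xi(J_1,J_2)$ is identical, using $J_1\mapsto(J_1,J_2)$ instead.
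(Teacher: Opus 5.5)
Your proposal is correct and is essentially the paper's proof: you pull back the $F_\sigma$ preimages from Theorem~\ref{thm:tensor} along the continuous slice map $J_2\mapsto(J_1,J_2)$, then invoke the classical fact that a $\boldsymbol{\Sigma}^0_2$-measurable map from a Polish space into a separable metrisable space is continuous on a comeagre, hence dense, $G_\delta$ set. One small caveat: the appeal to the Lebesgue--Hausdorff--Banach equivalence is superfluous, and if ``Baire class~$1$'' is read as ``pointwise limit of continuous maps'' it is not valid for an arbitrary separable metrisable target; your direct argument, via the nowhere dense boundaries of the closed pieces of the preimages of a countable base, already works for $\boldsymbol{\Sigma}^0_2$-measurable maps without it.
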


\begin{proof}
The $\boldsymbol{\Sigma}^0_2$-measurability follows by restricting
Theorem~\ref{thm:tensor} to $\{J_1\} \times \mathscr{X}$.
Since both $\mathscr{X}$ and $\mathscr{Y}$ are Polish, every
$\boldsymbol{\Sigma}^0_2$-measurable map from $\mathscr{X}$ to
$\mathscr{Y}$ is of Baire class~$1$.
The statement about continuity on a dense $G_\delta$ set is then the
classical theorem that every Baire class~$1$ map from a Polish space
into a metric space has a comeagre set of points of continuity.
The analogous statement for the other sections is identical.
\end{proof}

\begin{remark}\label{rem:closure-not-cont}
It is tempting to factor $\Xi$ as the composition of the continuous map
$S \colon \mathscr{X} \times \mathscr{X} \to E_0^\omega$,
$S(J_1, J_2) = \bigl(s_m(J_1, J_2)\bigr)_{m \in \omega}$, with the closure map
\[
    E_0^\omega \ni (z_j)_{j \in \omega}
    \longmapsto \overline{\{z_j \colon j \in \omega\}} \in \mathcal{F}(E_0),
\]
and to deduce continuity of $\Xi$.  However, this closure map is
\emph{not} continuous for the product topology on $E_0^\omega$ and the Wijsman
topology on $\mathcal{F}(E_0)$.  Indeed, consider $E_0 = \R$ and define
$z^{(n)} \in \R^\omega$ by $z^{(n)}_n = -1$ and $z^{(n)}_j = 0$ for $j \ne n$.
Then $z^{(n)} \to 0^\omega$ in the product topology, but
$d\bigl(-1, \overline{\{z^{(n)}_j\}}\bigr) = 0$ for all~$n$, whereas
$d\bigl(-1, \overline{\{0, 0, \ldots\}}\bigr) = 1$.  Thus continuity of~$\Xi$
cannot be established by this factorisation.
\end{remark}

\subsection{Open problems}\label{subsec:open}

\section{Further directions}\label{sec:further}

We briefly mention several classes of Banach lattices whose descriptive
complexity within the coding spaces introduced above would be natural to study.
For properties expressible by quotient-norm formulae over a fixed dense set,
Kania's definability theorem gives a general source of Borel upper bounds
\cite[Theorem~3.16]{Kan2026Pol}; the questions below indicate where additional
Banach-lattice structure or the sublattice coding creates extra work.

\subsection{Lattice properties of interest}

\begin{definition}
Let $E$ be a Banach lattice.
\begin{enumerate}[label=(\roman*)]
    \item $E$ has the \emph{Fatou property} if, for every upward-directed set
      $A \subseteq E^+$ with supremum~$b$, one has
      $\lVert b \rVert_E = \sup\{\lVert a \rVert_E \colon a \in A\}$.
    \item $E$ has the \emph{Nakano property} if, for every upward-directed and
      order-bounded set $A \subseteq E^+$,
      $\inf\{\lVert b \rVert_E \colon b \text{ is an upper bound of } A\}
      = \sup\{\lVert a \rVert_E \colon a \in A\}$.
    \item $E$ has the \emph{strong Nakano property} if, for every
      upward-directed and norm-bounded set $A \subseteq E^+$, there exists an
      upper bound $b \in E^+$ with
      $\lVert b \rVert_E = \sup\{\lVert a \rVert_E \colon a \in A\}$.
\end{enumerate}
\end{definition}

By Wickstead~\cite{Wick2007nak}, a Banach lattice $E$ has the Nakano property if
and only if its Dedekind completion $\hat{E}$ has the Fatou property.  By
Avil\'es et al.~\cite{aviles2018free}, the free Banach lattice generated by any
set has the strong Nakano property.

\begin{definition}\label{def:semiprojective}
A separable Banach lattice $E$ is \emph{semiprojective} if, for every Banach lattice $F$,
every increasing sequence
$(J_n)_{n=1}^\infty$ of closed ideals in~$F$ with
$J = \overline{\bigcup_{n=1}^\infty J_n}$, every lattice homomorphism
$\phi \colon E \to F/J$, and every $\varepsilon > 0$, there exist an integer
$n \ge 1$ and a lattice homomorphism
$\psi \colon E \to F/J_n$ such that
$\pi_n \circ \psi = \phi$ and
$\lVert \psi \rVert \le (1 + \varepsilon) \lVert \phi \rVert$,
where $\pi_n \colon F/J_n \to F/J$ denotes the canonical quotient map.
A lattice homomorphism $\phi$ for which such a $\psi$ exists is said to be
\emph{partially liftable}.
\end{definition}

\begin{definition}\label{def:projective}
Let $F$ be a Banach lattice, $J \subseteq F$ a closed ideal, and
$Q \colon F \to F/J$ the canonical quotient map.  A lattice homomorphism
$\phi \colon E \to F/J$ is \emph{liftable} if there exists a lattice
homomorphism $\psi \colon E \to F$ such that $Q \circ \psi = \phi$ and
$\lVert \psi \rVert \le (1 + \varepsilon) \lVert \phi \rVert$ for a given
$\varepsilon > 0$:
\[
    \begin{tikzcd}
    E \arrow[r, "\psi"] \arrow[rd, "\phi"'] & F \arrow[d, "Q"] \\
                                              & F/J
    \end{tikzcd}
\]
The Banach lattice $E$ is \emph{projective} if, for
every Banach lattice~$F$, every closed ideal $J \subseteq F$, and every
$\varepsilon > 0$, every lattice homomorphism $\phi \colon E \to F/J$ is
liftable (with the given~$\varepsilon$).
\end{definition}

As noted in~\cite{de2015free}, one cannot in general take $\varepsilon = 0$, even
when $E \cong \R$; the approximate lifting condition
$\lVert \psi \rVert \le (1 + \varepsilon) \lVert \phi \rVert$ is therefore
essential.

\begin{remark}\label{rem:proj-examples}
De~Pagter and Wickstead prove that free Banach lattices generated by sets
are projective.  They also prove separately that every finite-dimensional
Banach lattice is projective.  In particular, $\R$ is projective.

This gives a basic example of a projective Banach lattice that is not free:
the free Banach lattice generated by a singleton is lattice-isometric to
$\ell_\infty^2$, not to~$\R$.
\end{remark}

\begin{remark}\label{rem:proj-structure}
A permanence property that is available in the literature is the following:
if $(P_n)_{n=1}^\infty$ are projective Banach lattices with topological order
units (i.e.\ quasi-interior points: positive elements whose principal ideal is
norm-dense in the whole space), then their $\ell_1$-sum is projective
\cite[Theorem~11.8]{de2015free}.

By contrast, the corresponding finite $\ell_\infty$-sum question is explicitly
left open in \cite[Question~12.12]{de2015free}.  Thus we do not use any
closure property for finite $\ell_\infty$-direct sums here.
\end{remark}

\subsection{Principal ideals and metrisable SICK compacta}\label{subsec:principal-ideals}

Let $E$ be a Banach lattice and $u \in E$ a non-zero positive element.  The
principal ideal $J_u$ need not be closed in the ambient norm of~$E$ (for
instance, taking $E = L_1[0,1]$ and $u = \mathbf{1}$ gives $J_u = L_\infty$,
which is not norm-closed in~$L_1$).  However, equipped with the \emph{gauge norm}
\[
    \lVert x \rVert_u := \inf\{r > 0 \colon \lvert x \rvert \preceq r u\},
\]
the ideal $J_u$ becomes an AM-space with unit~$u$, hence by the
Krein--Kakutani representation theorem is lattice-isomorphic to $C(K; \R)$ for
some compact Hausdorff space~$K$.  When the ambient norm-closure is needed we
write $\overline{J_u}^{\lVert\cdot\rVert_E}$.

By Schaefer~\cite{Scha1974Ban}, $K$ can be taken to be the space $K_u(E)$ of
all valuations of $E$ preserving~$u$, with the topology of pointwise convergence.
Here, a valuation preserving~$u$ is a map $\varphi \colon E \to [0, \infty]$ such
that $\varphi(u) = 1$, $\varphi(x + y) = \varphi(x) + \varphi(y)$ and
$\varphi(x \wedge y) = \varphi(x) \wedge \varphi(y)$ for all $x, y \in E^+$,
and $\varphi(r z) = \lvert r \rvert \varphi(\lvert z \rvert)$ for all
$r \in \R$ and $z \in E$ (with the convention $0 \cdot \infty = 0$).

\begin{definition}[Metrisable SICK compacta; cf.~{\cite{AMRT2022Sick}}]
A compact metrisable space $K$ is a \emph{metrisable SICK compactum} (Separable
Ideal $C(K)$) if $K$ is homeomorphic to $K_u(E)$ for some separable Banach
lattice $E$ and some non-zero $u \in E^+$.
\end{definition}

Avil\'es, Mart\'\i nez-Cervantes, Rueda Zoca, and Tradacete define SICK compacta
in the class of all compact Hausdorff spaces.  In this paper we restrict
explicitly to the metrisable part of that class.  This restriction is important
for descriptive set theory: compact metrisable spaces have a standard
hyperspace coding, for instance as closed subsets of the Hilbert cube
$[0,1]^\omega$, whereas non-metrisable compact Hausdorff spaces require a
different coding.  We do not use the non-metrisable part of the SICK theory
below.

\subsection{Monotonicity and order smoothness in the sense of Kurc}\label{subsec:kurc}

We now turn to several lattice-geometric properties introduced by
Kurc~\cite{Kurc1993Dual} and determine their descriptive complexity within
$\Subl(E)$.

\begin{definition}[Kurc~\cite{Kurc1993Dual}]\label{def:kurc}
Let $X$ be a Banach lattice.
\begin{enumerate}[label=(\roman*)]
    \item $X$ is \emph{strictly monotone} (STM) if $\lVert x - y \rVert < \lVert x \rVert$
      whenever $\mathbf{0}_X \prec y \preceq x$.
    \item For $\varepsilon \in (0, 1]$ define
      \[
          \delta_X(\varepsilon)
          = \inf\bigl\{1 - \lVert x - y \rVert \colon
            \mathbf{0}_X \preceq y \preceq x,\;
            \lVert x \rVert = 1,\;
            \lVert y \rVert \ge \varepsilon\bigr\}.
      \]
      Then $X$ is \emph{uniformly monotone} (UM) if $\delta_X(\varepsilon) > 0$
      for all $\varepsilon \in (0, 1]$.
    \item $X$ is \emph{order smooth}, in abbreviation \emph{(o)-Sm}, if for
      every $x$ in the positive part of the unit sphere $S(X_+)$, the set
      $\partial_+\lVert x \rVert
      = \{x^* \in S(X^*_+) \colon \langle x, x^* \rangle = \lVert x \rVert\}$
      contains no proper order interval.
    \item For $\tau \in [0, 1]$ define the \emph{order modulus of smoothness}
      \[
          \rho_X(\tau)
          = \sup\bigl\{\lVert x \vee \tau y \rVert - 1 \colon
            \mathbf{0}_X \preceq x, y,\;
            \lVert x \rVert = \lVert y \rVert = 1\bigr\}.
      \]
      Then $X$ is \emph{order uniformly smooth}, in abbreviation
      \emph{(o)-USm}, if $\rho_X(\tau) / \tau \to 0$ as $\tau \downarrow 0$.
\end{enumerate}
\end{definition}

Kurc proves the identity
$\delta_X(\varepsilon) = \sigma_X(\varepsilon) / (1 + \sigma_X(\varepsilon))$,
where
$\sigma_X(\varepsilon)
= \inf\{\lVert x + y \rVert - 1 \colon
  x, y \succeq \mathbf{0}_X,\;
  \lVert x \rVert = 1,\;
  \lVert y \rVert \ge \varepsilon\}$,
so that $\delta_X(\varepsilon) > 0$ if and only if
$\sigma_X(\varepsilon) > 0$; see~\cite{Kurc1993Dual}.  He also establishes a
duality between UM and (o)-USm: if $X$ is UM then $X^*$ is (o)-USm, and
conversely.

For later use, when $F = \{0\}$ we adopt the conventions
\[
    \sigma_{\{0\}}(\varepsilon) = 0,
    \qquad
    \rho_{\{0\}}(\tau) = 0
    \qquad (\varepsilon,\tau \in (0,1]).
\]
This convention treats the zero lattice uniformly in the formulae below.  If one
prefers to regard the zero lattice as uniformly monotone by vacuity, the
corresponding class differs from ours by the singleton $\{\{0\}\}$, which does
not affect any of the Borel upper bounds.

\begin{theorem}\label{thm:kurc-borel}
Let $E$ be a separable Banach lattice and endow $\Subl(E)$ with the Wijsman
topology.
\begin{enumerate}[label=\textup{(\alph*)}]
    \item\label{kurc:baire1}
      For each fixed rational $\varepsilon, \tau \in (0, 1]$, the maps
      $F \mapsto \sigma_F(\varepsilon)$ and $F \mapsto \rho_F(\tau)$ are of
      Baire class~$1$ on $\Subl(E)$.  Consequently, the sets
      $\{F \colon \sigma_F(\varepsilon) > 0\}$ and
      $\{F \colon \rho_F(\tau) < r\}$ (for rational~$r$) are $F_\sigma$ subsets
      of $\Subl(E)$.
    \item\label{kurc:UM}
      The class $\mathrm{UM}(E)$ of uniformly monotone lattices in $\Subl(E)$
      is a $\boldsymbol{\Pi}^0_3$ subset of $\Subl(E)$.
    \item\label{kurc:oUSm}
      The class $\mathrm{(o)\text{-}USm}(E)$ of order uniformly smooth lattices
      in $\Subl(E)$ is a $\boldsymbol{\Pi}^0_3$ subset of $\Subl(E)$.
    \item\label{kurc:STM}
      The class $\mathrm{STM}(E)$ of strictly monotone lattices in $\Subl(E)$
      is coanalytic ($\boldsymbol{\Pi}^1_1$).
\end{enumerate}
\end{theorem}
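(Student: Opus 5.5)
The plan is to express each modulus as a countable infimum or supremum of continuous functions of $F$, built from the continuous selections $f_j\colon\SB(E)\to E$ of Theorem~\ref{thm:selection} (restricted to $\Subl(E)$). Since $F$ is a closed sublattice, the set $D_F=\{f_j(F)\}_j$ is dense in $F$. Hence $\{f_j(F)^+\}_j$ is dense in $F^+$, because $x\mapsto x^+$ is $1$-Lipschitz onto $F^+$. Normalised versions $f_j(F)^+/\lVert f_j(F)^+\rVert$ (taken when the norm is nonzero) are dense in the positive unit sphere $S(F_+)$.

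For $\sigma_F(\varepsilon)$, I would write it as the infimum over pairs $(i,j)$ of $\lVert x_i(F)+y_j(F)\rVert-1$, restricted to those pairs with $\lVert x_i(F)\rVert=1$ and $\lVert y_j(F)\rVert\ge\varepsilon$. The constraint $\lVert y\rVert\ge\varepsilon$ is the delicate point. I would replace it by a strict inequality $\lVert y\rVert>\varepsilon'$ and exploit monotonicity of $\sigma$ in $\varepsilon$. Alternatively, I would use the continuous rescaling $y\mapsto \max(1,\varepsilon/\lVert y\rVert)\,y$, which maps positive elements with $\lVert y\rVert>0$ onto the constraint set while preserving density. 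This gives $\sigma_F(\varepsilon)$ as a countable infimum of functions $g_{ij}(F)$. Each $g_{ij}$ is continuous on the open set $\{F\colon f_i(F)^+\neq0,\ f_j(F)^+\neq0\}$ and is set to $+\infty$ elsewhere. Such functions are upper semicontinuous, so their infimum is upper semicontinuous.

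There is an edge case: $F^+=\{0\}$ forces $F=\{0\}$, and the convention $\sigma=0$ is consistent there. This is harmless because $\{\{0\}\}$ is closed. Similarly, $\rho_F(\tau)$ is a countable supremum of continuous functions $\lVert x_i\vee\tau y_j\rVert-1$, using Lemma~\ref{lem:lattice-ops-lipschitz} for continuity of $\vee$, and is therefore lower semicontinuous. Semicontinuous real functions on a metrisable space are Baire class~$1$. The sets $\{\sigma_F(\varepsilon)>0\}=\bigcup_q\{\sigma\ge q\}$ are $F_\sigma$ because $\sigma$ is usc. The sets $\{\rho<r\}=\bigcup_{q<r}\{\rho\le q\}$ are $F_\sigma$ because $\rho$ is lsc. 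This proves (a).

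For (b), using Kurc's identity, $\mathrm{UM}(E)=\bigcap_{\varepsilon\in\bbQ\cap(0,1]}\{\sigma_F(\varepsilon)>0\}$, since $\sigma$ is monotone in $\varepsilon$. This is a countable intersection of $\boldsymbol{\Sigma}^0_2$ sets, hence $\boldsymbol{\Pi}^0_3$. For (c), I would write
\[
\mathrm{(o)\text{-}USm}(E)=\bigcap_{k}\bigcup_{m}\bigcap_{\tau\in\bbQ\cap(0,1/m)}\{\rho_F(\tau)\le\tau/k\}.
\]
Restricting to rational $\tau$ is justified because $\rho_F$ is monotone in $\tau$, which controls $\rho(\tau)/\tau$ between rationals up to a factor close to $1$. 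Each set $\{\rho\le q\}$ is closed, since $\rho$ is lsc. So the expression is $\Pi^0_1$ inside $\Sigma^0_2$ inside $\Pi^0_3$, which gives $\boldsymbol{\Pi}^0_3$.

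For (d), the complement of $\mathrm{STM}(E)$ is the projection onto $\Subl(E)$ of the set of $(F,x,y)\in\Subl(E)\times E\times E$ with $x,y\in F$, $0\le y\le x$, $y\neq0$, and $\lVert x-y\rVert=\lVert x\rVert$. This set is Borel by condition \ref{A3} and the closedness of the order relations. Its projection is therefore analytic, so $\mathrm{STM}(E)$ is coanalytic.

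The main obstacle I expect is in (a). The task is to make the countable dense parametrisation respect the norm constraints exactly and continuously in $F$. This matters particularly for $\lVert y\rVert\ge\varepsilon$ and for the exceptional $F$ where the selections have vanishing positive parts. The monotonicity and rescaling arguments above are my first attempt at this step.
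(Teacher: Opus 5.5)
Your proposal is correct. Its skeleton is the paper's: semicontinuity of $F\mapsto\sigma_F(\varepsilon)$ and $F\mapsto\rho_F(\tau)$ via countable infima and suprema over the continuous selections, then the same $\boldsymbol{\Pi}^0_3$ count for UM and the same projection argument for STM. It differs from the paper in two technical steps.

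In (a), the paper insists on approximants that are real-valued and continuous on all of $\Subl(E)$. To get them it uses the truncated normalisations $\nu_n(x)=x/\max\{\lVert x\rVert,1/n\}$, adds penalty terms $2(1-\lVert u_{i,n}(F)\rVert)$, and then needs a separate argument that $\sigma^{(n)}_F\downarrow\sigma_F$ and $\rho^{(n)}_F\uparrow\rho_F$. You instead normalise $f_j(F)^+$ exactly, with the rescaling $y\mapsto\max(1,\varepsilon/\lVert y\rVert)\,y$ in place of the paper's reduction to $\lVert y\rVert=\varepsilon$. You then extend by $+\infty$ (resp.\ $-\infty$) off the open set where $f_j(F)^+\neq0$, and these extended terms are already usc (resp.\ lsc). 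Semicontinuity therefore follows in one step, with no extra index $n$ and no monotonicity bookkeeping. What the paper's version buys is genuinely continuous real-valued approximants, at the cost of a longer verification.

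In (c), the paper uses convexity of $t\mapsto\lVert x\vee ty\rVert$ to make $\rho_F(\tau)/\tau$ non-decreasing. It then writes $\bigcap_k\bigcup_n\{\rho_F(1/n)<1/(kn)\}$ with $F_\sigma$ pieces. You use only the trivial monotonicity of $\rho_F$ in $\tau$ to pass to rational $\tau$. You then put the extra universal quantifier innermost, where lower semicontinuity makes each $\{\rho_F(\tau)\le\tau/k\}$ closed. Your version avoids convexity entirely; the paper's gives a formula with a single evaluation of $\rho_F$ per $n$.

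One precision about $F=\{0\}$. With the convention $\sigma_{\{0\}}(\varepsilon)=0$, the map $F\mapsto\sigma_F(\varepsilon)$ is in general not usc at $\{0\}$. For example, in $\ell_2$ the sublattices $\R e_k$ converge to $\{0\}$ in the Wijsman topology, yet $\sigma_{\R e_k}(\varepsilon)=\varepsilon$. So the sets $\{\sigma_F(\varepsilon)\ge q\}$ need not be closed, and "because $\sigma$ is usc" is not literally true. The fix is to run your usc argument on the open set $\Subl(E)\setminus\{\{0\}\}$ and use that open sets in a metrisable space are $F_\sigma$. This is presumably what you meant by ``harmless'', and it matches the paper's remark that modifying a Baire class~$1$ function at a single point preserves Baire class~$1$. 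For $\rho$ no such care is needed: since $\rho\ge0$, the convention $\rho_{\{0\}}=0$ is compatible with lower semicontinuity.
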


\begin{proof}
Let $f_j \colon \SB(E) \to E$, $j \in \omega$, be continuous selections as in
Theorem~\ref{thm:selection}.

\medskip
\emph{Proof of~\ref{kurc:baire1}.}
Fix a rational $\varepsilon \in (0, 1]$.

\smallskip
\emph{Reduction of $\sigma_F(\varepsilon)$ to unit vectors.}
Since the norm is monotone on $F_+$, for $x \in S(F_+)$ and $y \in F_+$ with
$\lVert y \rVert \ge \varepsilon$ we have
$0 \preceq \varepsilon\, y / \lVert y \rVert \preceq y$, hence
\[
    \lVert x + \varepsilon\, y / \lVert y \rVert \rVert
    \le \lVert x + y \rVert.
\]
Therefore, for every $F \ne \{0\}$, the infimum in the definition of
$\sigma_F(\varepsilon)$ may be
computed already on vectors with $\lVert y \rVert = \varepsilon$, and
\begin{equation}\label{eq:sigma-unit-reduction}
    \sigma_F(\varepsilon)
    = \inf\bigl\{\lVert x + \varepsilon\, y \rVert - 1
      \colon x, y \in S(F_+)\bigr\}.
\end{equation}

\smallskip
\emph{Continuous truncated normalisations.}
For each $n \ge 1$ define
\[
    \nu_n(x) = \frac{x}{\max\{\lVert x \rVert,\, 1/n\}}, \qquad x \in E.
\]
Then $\nu_n$ is continuous on $E$: writing
$\nu_n(x) = x / h_n(\lVert x \rVert)$ where $h_n(t) = \max\{t, 1/n\}$, the
denominator is a continuous function bounded below by $1/n > 0$.  Moreover,
$\lVert \nu_n(x) \rVert \le 1$ for all $x$, and
$\nu_n(x) = x / \lVert x \rVert$ whenever $\lVert x \rVert \ge 1/n$.

Set
\[
    u_{j,n}(F) := \nu_n\bigl(\lvert f_j(F) \rvert\bigr) \in E, \qquad
    F \in \Subl(E).
\]
Each map $F \mapsto u_{j,n}(F)$ is continuous on $\Subl(E)$: the selections
$f_j$ are continuous, the modulus
$x \mapsto \lvert x \rvert$ is continuous
(Lemma~\ref{lem:lattice-ops-lipschitz}\,(i)), and $\nu_n$ is continuous.

We record three properties of the truncated selectors.
Write
\[
    u_{j,n}(F)=c_{j,n}(F)\,\lvert f_j(F)\rvert,
    \qquad
    c_{j,n}(F):=\frac{1}{\max\{\lVert f_j(F)\rVert,\,1/n\}}.
\]
Then $0 \preceq u_{j,n}(F)\in F$, $\lVert u_{j,n}(F)\rVert\le 1$, and for each
fixed $j$ and $F$ the scalars $c_{j,n}(F)$ are non-decreasing in $n$.

Moreover, if $F\ne\{0\}$ and $x\in S(F_+)$, choose indices $(j_m)_{m\in\omega}$
such that $f_{j_m}(F)\to x$. Since $\lVert f_{j_m}(F)\rVert\to 1$, for every fixed
$n$ we eventually have $\lVert f_{j_m}(F)\rVert\ge 1/n$, and hence
\[
    u_{j_m,n}(F)
    = \frac{\lvert f_{j_m}(F)\rvert}{\lVert f_{j_m}(F)\rVert}
    \longrightarrow x.
\]
Thus, for each fixed $n$, the set
\[
    \{u_{j,n}(F)\colon j\in\omega,\ \lVert f_j(F)\rVert\ge 1/n\}
\]
is dense in $S(F_+)$.

\smallskip
\emph{Upper semicontinuous approximants for $\sigma_F(\varepsilon)$.}
For $i, j \in \omega$ and $n \ge 1$ define
\[
    \psi_{i,j,n}(F)
    := \lVert u_{i,n}(F) + \varepsilon\, u_{j,n}(F) \rVert - 1
    + 2\bigl(1 - \lVert u_{i,n}(F) \rVert\bigr)
    + 2\bigl(1 - \lVert u_{j,n}(F) \rVert\bigr).
\]
Each $\psi_{i,j,n}$ is a \emph{continuous} function on $\Subl(E)$ (a
composition and sum of continuous maps).  The penalty terms
$2(1 - \lVert u_{i,n}(F) \rVert)$ are non-negative and vanish precisely when
$\lVert u_{i,n}(F) \rVert = 1$; when $u_{i,n}(F) = 0$ (i.e.\ $f_i(F) = 0$),
the penalty contributes~$2$, ensuring such indices cannot artificially decrease
the infimum.

Define
\[
    \sigma^{(n)}_F(\varepsilon)
    := \inf_{i,j \in \omega} \psi_{i,j,n}(F), \qquad F \in \Subl(E).
\]
Since $\sigma^{(n)}_\bullet(\varepsilon)$ is the pointwise infimum of countably
many \emph{continuous} functions, it is upper semicontinuous, hence Baire
class~$1$.

We claim that $\sigma^{(n)}_F(\varepsilon) \downarrow \sigma_F(\varepsilon)$
for every $F \ne \{0\}$.

Fix $F \ne \{0\}$.  First note that for every fixed $i,j$ the sequence
$\psi_{i,j,n}(F)$ is non-increasing in $n$.
Indeed, write
\[
    u_{i,n}(F) = c_{i,n}\,\lvert f_i(F) \rvert,
    \qquad
    u_{j,n}(F) = c_{j,n}\,\lvert f_j(F) \rvert,
\]
where $0 \le c_{i,n} \le c_{i,n+1}$ and
$0 \le c_{j,n} \le c_{j,n+1}$.
If only the first coefficient is increased from $c$ to $c' \ge c$, while
the second vector is kept fixed, then by the triangle inequality the main norm
term can increase by at most
$(c'-c)\,\lVert f_i(F) \rVert$, whereas the corresponding penalty term
decreases by exactly
$2(c'-c)\,\lVert f_i(F) \rVert$.
Hence the total expression does not increase.
The same argument applies to the second coefficient, since
$\varepsilon \le 1 < 2$.
Therefore $\psi_{i,j,n+1}(F) \le \psi_{i,j,n}(F)$, and taking infima gives that
$\sigma^{(n)}_F(\varepsilon)$ is non-increasing.

Next we prove that
$\sigma_F(\varepsilon) \le \sigma^{(n)}_F(\varepsilon)$ for every $n$.
Fix $i,j,n$ and put
\[
    a := u_{i,n}(F), \qquad b := u_{j,n}(F), \qquad
    \alpha := \lVert a \rVert, \qquad \beta := \lVert b \rVert.
\]
If $\alpha,\beta > 0$, then
$x := a/\alpha$ and $y := b/\beta$ belong to $S(F_+)$, and
\[
    x + \varepsilon y
    = (a + \varepsilon b)
      + (1-\alpha)x
      + \varepsilon(1-\beta)y.
\]
Hence
\[
    \lVert x + \varepsilon y \rVert - 1
    \le \lVert a + \varepsilon b \rVert - 1
      + (1-\alpha) + \varepsilon(1-\beta)
    \le \psi_{i,j,n}(F),
\]
because $\varepsilon \le 1$.
Using~\eqref{eq:sigma-unit-reduction}, we obtain
$\sigma_F(\varepsilon) \le \psi_{i,j,n}(F)$.
If $\alpha=0$ or $\beta=0$, then
$\psi_{i,j,n}(F) \ge \varepsilon \ge \sigma_F(\varepsilon)$, since
$\sigma_F(\varepsilon) \le \varepsilon$ by the triangle inequality.
Taking the infimum over $i,j$ yields
\[
    \sigma_F(\varepsilon) \le \sigma^{(n)}_F(\varepsilon).
\]

Conversely, let $\eta > 0$.
By~\eqref{eq:sigma-unit-reduction}, choose $x,y \in S(F_+)$ such that
\[
    \lVert x + \varepsilon y \rVert - 1
    < \sigma_F(\varepsilon) + \eta.
\]
Since $\{f_j(F)\}_{j \in \omega}$ is dense in $F$ and the map
$z \mapsto \lvert z \rvert/\lVert z \rVert$ is continuous at every non-zero
positive vector, we may choose indices $i,j$ such that
\[
    \left\lVert
    \frac{\lvert f_i(F) \rvert}{\lVert f_i(F) \rVert} - x
    \right\rVert < \eta,
    \qquad
    \left\lVert
    \frac{\lvert f_j(F) \rvert}{\lVert f_j(F) \rVert} - y
    \right\rVert < \eta.
\]
For all sufficiently large $n$, we have
$\lVert f_i(F) \rVert,\lVert f_j(F) \rVert \ge 1/n$, hence
\[
    u_{i,n}(F)=\frac{\lvert f_i(F) \rvert}{\lVert f_i(F) \rVert},
    \qquad
    u_{j,n}(F)=\frac{\lvert f_j(F) \rvert}{\lVert f_j(F) \rVert},
\]
so the penalty terms vanish.
Therefore, for all such $n$,
\[
\begin{aligned}
    \psi_{i,j,n}(F)
    &= \lVert u_{i,n}(F) + \varepsilon\,u_{j,n}(F) \rVert - 1 \\
    &\le \lVert x + \varepsilon y \rVert - 1
       + \lVert u_{i,n}(F) - x \rVert
       + \varepsilon\,\lVert u_{j,n}(F) - y \rVert \\
    &< \sigma_F(\varepsilon) + (2+\varepsilon)\eta.
\end{aligned}
\]
Hence
$\limsup_n \sigma^{(n)}_F(\varepsilon) \le \sigma_F(\varepsilon)$.
Combined with the opposite inequality, this proves
$\sigma^{(n)}_F(\varepsilon)\downarrow \sigma_F(\varepsilon)$.

Therefore $F \mapsto \sigma_F(\varepsilon)$ is upper semicontinuous on
$\Subl(E)\setminus\{\{0\}\}$.
Since changing the value of a Baire class~$1$ function at a single point preserves
Baire class~$1$, and $\sigma_{\{0\}}(\varepsilon)=0$ by convention,
$F \mapsto \sigma_F(\varepsilon)$ is Baire class~$1$ on all of $\Subl(E)$.
Consequently,
\[
    \{F \colon \sigma_F(\varepsilon) > 0\}
    = \bigcup_{m \ge 1} \{F \colon \sigma_F(\varepsilon) \ge 1/m\}
\]
is $F_\sigma$.

\smallskip
\emph{Lower semicontinuous approximants for $\rho_F(\tau)$.}
Fix a rational $\tau \in (0, 1]$.  An analogous reduction, valid for every
$F \ne \{0\}$, gives
\begin{equation}\label{eq:rho-unit-reduction}
    \rho_F(\tau)
    = \sup\bigl\{\lVert x \vee \tau\, y \rVert - 1
      \colon x, y \in S(F_+)\bigr\}.
\end{equation}
For $i, j \in \omega$ and $n \ge 1$ define
\[
    \chi_{i,j,n}(F)
    := \max\Bigl\{0,\;
    \lVert u_{i,n}(F) \vee \tau\, u_{j,n}(F) \rVert - 1\Bigr\}.
\]
Each $\chi_{i,j,n}$ is continuous (since $\vee$ is continuous by
Lemma~\ref{lem:lattice-ops-lipschitz}\,(ii), the norm is continuous, and the
maximum with~$0$ is continuous).  Put
\[
    \rho^{(n)}_F(\tau) := \sup_{i,j \in \omega} \chi_{i,j,n}(F).
\]
Being the pointwise supremum of countably many \emph{continuous} functions,
$\rho^{(n)}_\bullet(\tau)$ is lower semicontinuous, hence Baire class~$1$.
Moreover, $\rho^{(n)}_F(\tau)$ is non-decreasing in~$n$: since the scalar
multipliers defining $u_{j,n}(F)$ are non-decreasing in~$n$, so are
$u_{j,n}(F)$ in the lattice order, and the lattice join and norm are
order-preserving on $E^+$.

We now show that
$\rho^{(n)}_F(\tau) \uparrow \rho_F(\tau)$ for every $F \ne \{0\}$.

For fixed $F$ and $j$, the vectors $u_{j,n}(F)$ increase in the lattice order with $n$,
because the scalar multipliers defining them are non-decreasing.
Hence, for fixed $i,j$,
\[
    u_{i,n}(F) \vee \tau\,u_{j,n}(F)
    \preceq
    u_{i,n+1}(F) \vee \tau\,u_{j,n+1}(F),
\]
and therefore
$\chi_{i,j,n}(F) \le \chi_{i,j,n+1}(F)$.
Taking suprema gives that $\rho^{(n)}_F(\tau)$ is non-decreasing in $n$.

Also, $\rho^{(n)}_F(\tau) \le \rho_F(\tau)$.
Indeed, if $a,b \in F_+$ satisfy $\lVert a \rVert,\lVert b \rVert \le 1$ and
both are non-zero, write
$a=\alpha x$, $b=\beta y$ with $\alpha,\beta \in (0,1]$ and
$x,y \in S(F_+)$.
Then $a \preceq x$ and $\tau b \preceq \tau y$, so
\[
    a \vee \tau b \preceq x \vee \tau y.
\]
By norm monotonicity,
\[
    \lVert a \vee \tau b \rVert - 1
    \le \lVert x \vee \tau y \rVert - 1
    \le \rho_F(\tau).
\]
If one of $a,b$ is zero, then
$\chi_{i,j,n}(F)=0 \le \rho_F(\tau)$.
Applying this to
$a=u_{i,n}(F)$ and $b=u_{j,n}(F)$ yields
$\rho^{(n)}_F(\tau) \le \rho_F(\tau)$.

Conversely, let $\eta > 0$.
By~\eqref{eq:rho-unit-reduction}, choose $x,y \in S(F_+)$ such that
\[
    \lVert x \vee \tau y \rVert - 1 > \rho_F(\tau) - \eta.
\]
As above, choose indices $i,j$ such that
\[
    \left\lVert
    \frac{\lvert f_i(F) \rvert}{\lVert f_i(F) \rVert} - x
    \right\rVert < \eta,
    \qquad
    \left\lVert
    \frac{\lvert f_j(F) \rvert}{\lVert f_j(F) \rVert} - y
    \right\rVert < \eta.
\]
For all sufficiently large $n$, the truncation is inactive at $f_i(F)$ and $f_j(F)$, so
\[
    u_{i,n}(F)=\frac{\lvert f_i(F) \rvert}{\lVert f_i(F) \rVert},
    \qquad
    u_{j,n}(F)=\frac{\lvert f_j(F) \rvert}{\lVert f_j(F) \rVert}.
\]
By continuity of $\vee$ and of the norm,
\[
    \chi_{i,j,n}(F) > \rho_F(\tau) - 2\eta
\]
for all sufficiently large $n$.
Hence
$\liminf_n \rho^{(n)}_F(\tau) \ge \rho_F(\tau)$.
Combined with the opposite inequality, this proves
$\rho^{(n)}_F(\tau)\uparrow \rho_F(\tau)$.

Thus $F \mapsto \rho_F(\tau)$ is lower semicontinuous on
$\Subl(E)\setminus\{\{0\}\}$.
Since $\rho_{\{0\}}(\tau)=0$ by convention and changing the value at a single
point preserves Baire class~$1$, $F \mapsto \rho_F(\tau)$ is Baire class~$1$
on all of $\Subl(E)$.
Therefore, for every rational $r$, the set
$\{F \colon \rho_F(\tau) < r\}$ is $F_\sigma$.

\medskip
\emph{Proof of~\ref{kurc:UM}.}
Kurc shows that $\sigma_X(\varepsilon)$ (and hence $\delta_X(\varepsilon)$) is
non-decreasing in $\varepsilon$; see~\cite{Kurc1993Dual}.  Therefore
$\delta_F(\varepsilon) > 0$ for all $\varepsilon \in (0, 1]$ if and only if
$\sigma_F(1/n) > 0$ for all $n \ge 1$.  Hence
\[
    \mathrm{UM}(E) = \bigcap_{n \ge 1}
    \{F \in \Subl(E) \colon \sigma_F(1/n) > 0\}.
\]
By~\ref{kurc:baire1}, each set $\{F \colon \sigma_F(1/n) > 0\}$ is $F_\sigma$
($\boldsymbol{\Sigma}^0_2$).  A countable intersection of
$\boldsymbol{\Sigma}^0_2$ sets is $\boldsymbol{\Pi}^0_3$, so
$\mathrm{UM}(E) \in \boldsymbol{\Pi}^0_3(\Subl(E))$.

\medskip
\emph{Proof of~\ref{kurc:oUSm}.}
Kurc observes that for fixed $x, y \succeq \mathbf{0}_X$ with unit norm, the
function $t \mapsto \lVert x \vee t y \rVert$ is convex and satisfies
$\lVert x \vee 0 \cdot y \rVert = \lVert x \rVert = 1$.  Setting
$g(t) = \lVert x \vee t y \rVert - 1$, we have $g(0) = 0$ and $g$ convex,
so for $0 < s < t$ convexity gives
$g(s) \le (s/t)\, g(t) + (1 - s/t)\, g(0) = (s/t)\, g(t)$, whence
$g(s)/s \le g(t)/t$.  Thus $t \mapsto (\lVert x \vee t y \rVert - 1) / t$ is
non-decreasing on $(0, 1]$.  Taking the supremum over all such pairs preserves
this monotonicity, so $\tau \mapsto \rho_F(\tau) / \tau$ is non-decreasing
on $(0, 1]$.

Therefore $\rho_F(\tau) / \tau \to 0$ as $\tau \downarrow 0$ if and only if
for every $k \ge 1$ there exists $n \ge 1$ such that
$\rho_F(1/n) / (1/n) < 1/k$, i.e.\ $\rho_F(1/n) < 1/(kn)$.  Hence
\[
    \mathrm{(o)\text{-}USm}(E)
    = \bigcap_{k \ge 1} \bigcup_{n \ge 1}
      \bigl\{F \in \Subl(E) \colon \rho_F(1/n) < 1/(kn)\bigr\}.
\]
By~\ref{kurc:baire1}, each set $\{F \colon \rho_F(1/n) < 1/(kn)\}$ is
$F_\sigma$.  A countable union of $F_\sigma$ sets is $F_\sigma$, so
$\bigcup_{n \ge 1} \{F \colon \rho_F(1/n) < 1/(kn)\}$ is $F_\sigma$
($\boldsymbol{\Sigma}^0_2$).  A countable intersection of
$\boldsymbol{\Sigma}^0_2$ sets is $\boldsymbol{\Pi}^0_3$, so
$\mathrm{(o)\text{-}USm}(E) \in \boldsymbol{\Pi}^0_3(\Subl(E))$.

\medskip
\emph{Proof of~\ref{kurc:STM}.}
It suffices to show that the complement of $\mathrm{STM}(E)$ is analytic
($\boldsymbol{\Sigma}^1_1$).  Define
\begin{multline*}
    \mathcal{A} = \bigl\{(F, x, y) \in \Subl(E) \times E \times E \colon
    x \in F,\; y \in F,\; x \in E^+,\; y \in E^+,\\
    y \ne 0,\; y \preceq x,\;
    \lVert x - y \rVert = \lVert x \rVert\bigr\}.
\end{multline*}
We verify that $\mathcal{A}$ is Borel in
$\Subl(E) \times E \times E$.  The conditions $x \in E^+$ and $y \in E^+$ each
define closed subsets of~$E$ (since $E^+$ is norm-closed).  The condition
$y \preceq x$ (equivalently $x - y \in E^+$) is closed.  The condition
$\lVert x - y \rVert = \lVert x \rVert$ is closed (the norm is continuous).
The condition $y \ne 0$ is open.  The membership conditions $x \in F$ and
$y \in F$ are each closed in $E \times \Subl(E)$ by~\ref{A3}.  Hence
$\mathcal{A}$ is a Borel subset of the product.

The complement of $\mathrm{STM}(E)$ is the projection
$\pi_1(\mathcal{A}) \subseteq \Subl(E)$, where
$\pi_1(F, x, y) = F$.  The continuous image (projection) of a Borel subset of a
product of Polish spaces is analytic.  Therefore
$\Subl(E) \setminus \mathrm{STM}(E)$ is $\boldsymbol{\Sigma}^1_1$, and so
$\mathrm{STM}(E)$ is $\boldsymbol{\Pi}^1_1$.
\end{proof}

\begin{remark}\label{rem:kurc-topology-transfer}
The Borel/analytic/coanalytic status of the sets in
Theorem~\ref{thm:kurc-borel} does not depend on the choice of admissible
topology, since any two admissible topologies generate the same Effros--Borel
$\sigma$-algebra.  However, the Borel \emph{rank} may shift: since the
identity map between two admissible topologies is Baire class~$1$,
a $\boldsymbol{\Pi}^0_3$ set in the Wijsman topology remains Borel in any
other admissible topology, and is at most $\boldsymbol{\Pi}^0_4$ there.
\end{remark}

\begin{remark}\label{rem:kurc-dual}
Kurc's (o)-Sm property is defined using the subdifferential
$\partial_+\lVert x \rVert \subseteq X^*_+$, which involves the dual
lattice.  Without a Borel coding of the dual unit sphere as a functor on
$\Subl(E)$, we do not presently determine the Borel rank of (o)-Sm.  However,
Kurc's duality results~\cite{Kurc1993Dual} imply that once a functorial
Borel dual-coding is established, the complexity of STM and (o)-Sm would
transfer between primal and dual codes.
\end{remark}

\subsection{Strict monotonicity: a test family and the completeness problem}\label{subsec:STM-complete}

The preceding theorem gives the coanalytic upper bound for strict monotonicity.
A natural next question is whether this upper bound is sharp.  The following
simple observation is a useful source of non-strictly monotone examples, but it
also shows why a direct reduction through compact hyperspaces is too naive.

\begin{lemma}\label{lem:C(K)-STM}
Let $K$ be a non-empty compact Hausdorff space.  Then the Banach lattice $C(K)$
(with the supremum norm and pointwise order) is strictly monotone if and only if
$K$ is a singleton.
\end{lemma}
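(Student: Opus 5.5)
The plan is to prove the two implications separately, using the definition of strict monotonicity from Definition~\ref{def:kurc}\,(i): $\lVert x - y\rVert < \lVert x\rVert$ whenever $\mathbf{0} \prec y \preceq x$.

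\emph{Singleton case.} If $K = \{p\}$, then $C(K)$ is lattice-isometric to $\R$ with the usual order and absolute value. Suppose $0 < y \le x$ are real numbers. Then $\lvert x - y\rvert = x - y < x = \lvert x\rvert$, so $C(K)$ is strictly monotone. This direction is immediate.

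\emph{Non-singleton case.} Suppose $K$ contains two distinct points $p \ne q$. I will exhibit $\mathbf{0} \prec y \preceq x$ with $\lVert x - y\rVert = \lVert x\rVert$. Since $K$ is compact Hausdorff, it is normal, so Urysohn's lemma applies. Choose disjoint open neighbourhoods of $p$ and $q$, and use Urysohn to obtain a continuous $y \colon K \to [0,1]$ with $y(p) = 1$ and $y \equiv 0$ on a neighbourhood of $q$; in particular $y(q) = 0$. Take $x = \mathbf{1}_K$, the constant function $1$. The three required properties then check out as follows:
\begin{itemize}
\item $y \preceq x$, because $0 \le y \le 1$ pointwise.
\item $y \succ \mathbf{0}$, because $y \ge 0$ and $y(p) = 1 \ne 0$.
\item $\lVert x - y\rVert_\infty = 1 = \lVert x\rVert_\infty$. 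Indeed $0 \le 1 - y \le 1$ everywhere, so the norm is at most $1$, and $(1 - y)(q) = 1$, so it equals $1$.
\end{itemize}
Hence strict monotonicity fails. The case $K = \emptyset$ is excluded by hypothesis.

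There is no serious obstacle. The only point needing care is the existence of a continuous function separating two points, which uses complete regularity of compact Hausdorff spaces, i.e.\ Urysohn's lemma via normality. One should also note that the strict inequality $y \succ \mathbf{0}$ means $y \ne 0$ together with $y \ge 0$, which holds because $y(p) = 1$.
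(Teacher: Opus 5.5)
Your proof is correct and follows essentially the same route as the paper. The singleton case reduces to $\R$; otherwise Urysohn's lemma gives $y$ with $0\le y\le 1$ separating two points, so $x\equiv 1$ satisfies $\mathbf{0}\prec y\preceq x$ and $\|x-y\|_\infty=1=\|x\|_\infty$. You swap the roles of $p$ and $q$ and make $y$ vanish on a whole neighbourhood of $q$, which is harmless but unnecessary, since $y(q)=0$ already suffices.
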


\begin{proof}
If $K$ is a singleton, then $C(K)\cong\R$, which is strictly monotone.

Conversely, assume that $K$ contains two distinct points $p\ne q$.  By
Urysohn's lemma there exists $y\in C(K)$ with $0\le y\le 1$, $y(p)=0$, and
$y(q)=1$.  Let $x\equiv 1$.  Then $0\prec y\preceq x$, but $x-y$ attains the
value $1$ at~$p$, so
\[
    \|x-y\|_\infty=1=\|x\|_\infty .
\]
Thus $C(K)$ is not strictly monotone.
\end{proof}

\begin{remark}\label{rem:STM-complete-caution}
Lemma~\ref{lem:C(K)-STM} suggests trying to reduce well-foundedness of trees to
strict monotonicity by assigning to a tree $T$ a compact space $K_T$ which is a
singleton exactly when $T$ is well founded.  Such an argument cannot proceed via
a Borel map $T\mapsto K_T$ into the hyperspace $\mathcal F(\Delta)$: the set of
singleton compacta is Borel in $\mathcal F(\Delta)$, whereas the classical
well-founded-tree set is non-Borel.  One must also use trees on
$\N^{<\omega}$, not on $2^{<\omega}$, for the classical
$\boldsymbol{\Pi}^1_1$-complete well-foundedness problem; on finitely branching
binary trees, well-foundedness is Borel by K\"onig's lemma.

Consequently, a proof that $\mathrm{STM}(\mathcal C)$ is
$\boldsymbol{\Pi}^1_1$-hard would need a genuinely Banach-lattice-theoretic
coding, rather than only the elementary compact-space test above.
\end{remark}

\begin{problem}\label{prob:STM-complete}
Is $\mathrm{STM}(\mathcal C)\subseteq\Subl(\mathcal C)$
$\boldsymbol{\Pi}^1_1$-complete?  More generally, for which separable ambient
Banach lattices $E$ is $\mathrm{STM}(E)$ coanalytic-complete in $\Subl(E)$?
\end{problem}

\subsection{Open questions}

Determining the Borel complexity of the classes of lattices possessing the Fatou,
Nakano, strong Nakano, or projectivity properties within our coding spaces, as
well as characterising metrisable SICK compacta, are natural open problems.
Whether the $\boldsymbol{\Pi}^0_3$ bounds for UM and (o)-USm in
Theorem~\ref{thm:kurc-borel} are sharp (i.e.\ $\boldsymbol{\Pi}^0_3$-complete)
and the determination of the exact complexity of (o)-Sm are also of interest.
Problem~\ref{prob:STM-complete} asks whether the coanalytic upper bound for
strict monotonicity is sharp in the universal coding space.

\begin{remark}[Isometric lattice isomorphism]\label{rem:isom-relation}
The relation of \emph{isometric lattice isomorphism} on $\Subl(\mathcal{C})$,
defined by $Y_1 \sim Y_2$ if and only if $Y_1$ and $Y_2$ are isometrically
lattice-isomorphic, is analytic ($\boldsymbol{\Sigma}^1_1$).

Indeed, rather than quantifying over the non-separable operator space
$\mathcal{L}(\mathcal{C})$, one may code an operator by its values on a fixed
dense sequence in $\mathcal{C}$; this yields a Polish parameter space
$\mathcal{C}^\omega$ of operator codes.  Using the continuous selectors from
Theorem~\ref{thm:selection}, one can express in a Borel way that such a code
determines a surjective linear isometry $T \colon Y_1 \to Y_2$ satisfying
$T(\lvert x \rvert) = \lvert T x \rvert$ on a dense set, hence on all of $Y_1$
by continuity of the lattice operations.  Therefore $\sim$ is the projection of
a Borel subset of
$\Subl(\mathcal{C}) \times \Subl(\mathcal{C}) \times \mathcal{C}^\omega$.

Determining whether $\sim$ is Borel remains a natural structural problem for
this coding space.
\end{remark}

\begin{remark}[Complexity of metrisable SICK compacta]\label{rem:SICK-complexity}
Let $\mathbb H=[0,1]^\omega$ be the Hilbert cube and equip
$\mathcal{F}(\mathbb H)$ with the Vietoris topology.  Since every compact
metrisable space is homeomorphic to a closed subset of $\mathbb H$, this is the
natural hyperspace for the metrisable SICK question.  Define
\[
    \mathrm{SICK}_{\mathrm{met}}
    := \bigl\{K \in \mathcal{F}(\mathbb H) \colon K \text{ is a metrisable SICK
    compactum}\bigr\}.
\]
Using $\mathcal{F}(\Delta)$ instead would restrict the problem further to
zero-dimensional compacta, because closed subsets of the Cantor set are
zero-dimensional.  A treatment of all, possibly non-metrisable, SICK compacta
would require a different standard Borel coding of compact Hausdorff spaces, or
an indirect coding through the separable Banach lattices and principal ideals
that produce their structure spaces.

The characterisation in~\cite{AMRT2022Sick} suggests that
$\mathrm{SICK}_{\mathrm{met}}$ should be at most analytic, but the exact level in
the projective hierarchy remains open.
\end{remark}


\subsection*{Acknowledgements}
The author gratefully acknowledges support received from
NCN Sonata-Bis~13 (2023/50/E/ST1/00067).


\end{document}